\setlist{noitemsep}
\newcommand{\FF}{\mathbb{F}}
\newcommand{\GG}{\mathbb{G}}
\newcommand{\lra}{\longrightarrow}
\newcommand{\mc}{\mathcal}
\newcommand{\N}{\mathbb{N}}
\newcommand{\OO}{\mathcal{O}}
\newcommand{\op}{\tn{op}}
\renewcommand{\P}{\mathbb{P}}
\newcommand{\ra}{\rightarrow}
\newcommand{\stlra}[1]{\stackrel{#1}{\longrightarrow}}
\newcommand{\stackstag}[1]{\cite[\href{https://stacks.math.columbia.edu/tag/#1}{Tag #1}]{stacks-project}}
\newcommand{\T}{\mathcal{T}}
\newcommand{\tn}{\text}
\newcommand{\U}{\mc{U}}
\newcommand{\Z}{\mathbb{Z}}
\newcommand{\lo}[1]{\mathrm{L}^{\perp}{#1}}
\newcommand{\ro}[1]{\mathrm{R}^{\perp}{#1}}
\DeclareMathOperator{\Bgm}{\mathrm{B_{gm}}}
\DeclareMathOperator{\Ch}{Ch} 
\DeclareMathOperator{\cone}{cone}
\DeclareMathOperator{\End}{End}
\DeclareMathOperator{\Ext}{Ext}
\DeclareMathOperator{\GL}{GL}
\DeclareMathOperator{\Gr}{Gr}
\DeclareMathOperator{\GW}{GW}
\DeclareMathOperator{\Hom}{Hom}
\DeclareMathOperator{\K}{K}
\DeclareMathOperator{\Perf}{Perf}
\DeclareMathOperator{\sheafhom}{\mathscr{H}\textit{\kern -4pt om}\,}
\DeclareMathOperator{\SL}{SL}
\DeclareMathOperator{\Spec}{Spec}
\DeclareMathOperator{\Sym}{Sym}
\DeclareMathOperator{\Vect}{Vect}
\DeclareMathOperator{\W}{W}
\DeclareMathOperator{\Rep}{\mathbf{Rep}}
\numberwithin{equation}{section}
\theoremstyle{definition}
\newtheorem{definition}[equation]{Definition}
\newtheorem{example}[equation]{Example}
\newtheorem{remark}[equation]{Remark}
\theoremstyle{plain}
\newtheorem{corollary}[equation]{Corollary}
\newtheorem{lemma}[equation]{Lemma}
\newtheorem{proposition}[equation]{Proposition}
\newtheorem{theorem}[equation]{Theorem}
\title{\textsc{hermitian k-theory of grassmannians}}
\author{
    \textsc{herman rohrbach}
    \thanks{
        The author was partially supported by the research training group \emph{GRK 2240: Algebro-Geometric Methods in Algebra, Arithmetic and Topology} and by the ERC through the project QUADAG.
        This paper is part of a project that has received funding from the European Research Council (ERC) under the European Union's Horizon 2020 research and innovation programme (grant agreement No. 832833). \newline
        \includegraphics[scale=0.08]{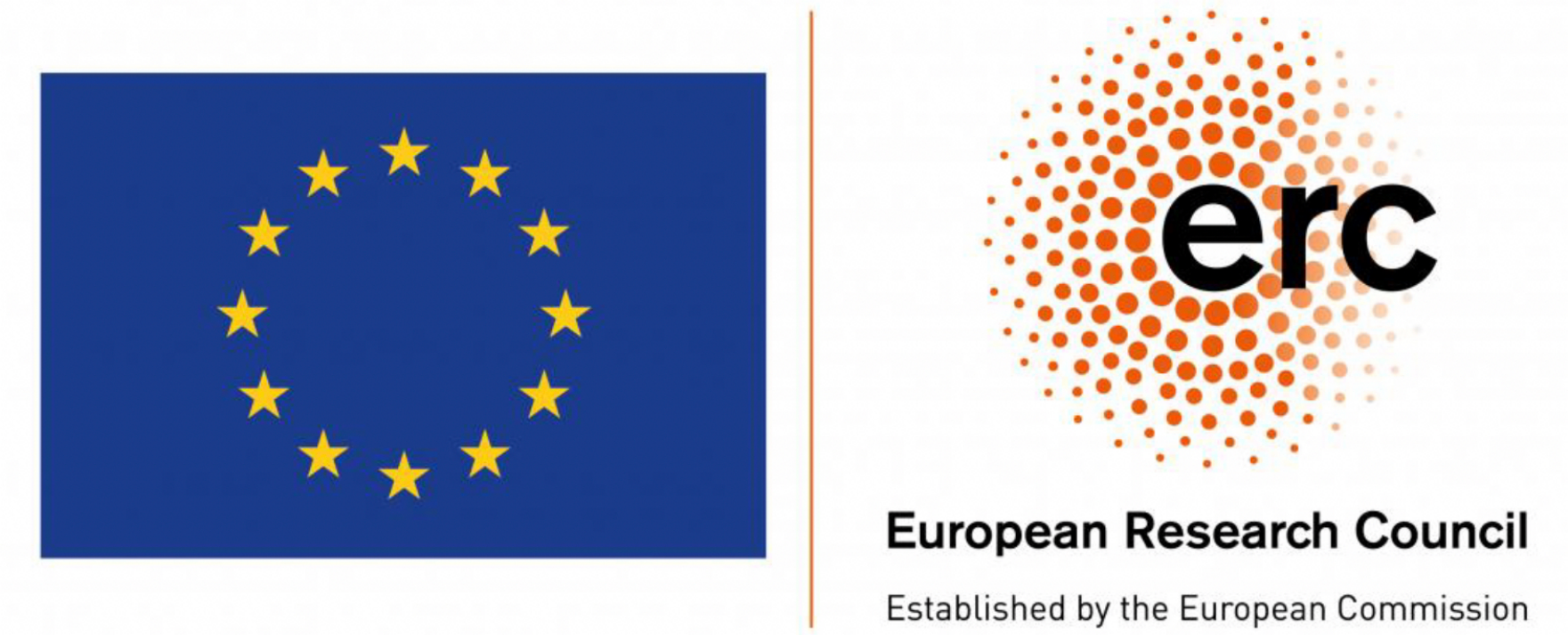}
    }
}
\date{}
\begin{document}

\maketitle

\begin{abstract}
    We compute the additive structure of the Hermitian $\K$-theory spectrum of an even-dimensional Grassmannian over a base field $k$ of characteristic zero in terms of the Hermitian $\K$-theory of $X$, using certain symmetries on Young diagrams.
    The result is a direct sum of copies of the $\K$-theory of the base field and copies of the $\GW$-theory of the base field, indexed by \emph{asymmetric} and \emph{symmetric} Young diagrams, respectively.
\end{abstract}

\section{Introduction}
    \label{section:introduction}

In the context of algebraic geometry and motivic homotopy theory, Hermitian $\K$-theory is the study of vector bundles with quadratic forms on schemes. 
It is a finer invariant than algebraic $\K$-theory, which also makes it harder to compute. 
Many known computations in algebraic $\K$-theory have yet to be tackled for Hermitian $\K$-theory.
Even the foundations of Hermitian $\K$-theory lag behind those of algebraic $\K$-theory, despite recent strides \cite{calmes2020hermitian1, calmes2020hermitian2, calmes2020hermitian3}.
Especially in equivariant Hermitian $\K$-theory, a number of analogues of foundational results in equivariant algebraic $\K$-theory are missing, notably dévissage, making computations notoriously difficult.

In their paper \cite{balmer12grassmann} on Witt groups of Grassmannians, Balmer and Calmès remark that it might be surprising to some that it took almost thirty years after the invention of Witt groups to do the computation. 
In the same spirit, it might be surprising that it took another decade to extend their results to higher Hermitian $\K$-theory, even partially, since the fundament of the theory has been laid in e.g. \cite{balmer99witt1, balmer99witt2, hornbostel04, hornbostel05}.

This paper continues the work begun in \cite{rohrbach22} by providing key computations of the Hermitian $\K$-theory of Grassmannians over a base field $k$ of characteristic zero.
The strategy for these computations depends on certain symmetries on Young diagrams; as far the author is aware, it is a novelty to consider these symmetries as they relate to Hermitian $\K$-theory, though there might be a connection with the method of computation used in \cite{balmer12grassmann}.

For a Grassmannian $X = \Gr(d, d+e)$ of even dimension $de$, there is an involution on the set of Young diagrams, which corresponds to the action of the standard dualiy on a certain semi-orthogonal decomposition of the derived category of $X$, which is the key observation in the computation.
The main result of this paper is Theorem \ref{theorem:grothendieckwittspectrumofevengrassmannian}, which gives a combinatorial description of the additive structure of the Grothendieck-Witt spectrum of an even-dimensional Grassmannian in terms of copies of the Grothendieck-Witt spectrum and the algebraic $\K$-theory spectrum of the base field $k$.

\subsection*{Acknowledgements}
    \label{subsection:acknowledgements}
    
I am grateful to my advisors Jens Hornbostel and Marcus Zibrowius for numerous helpful discussions.

\section{Semi-orthogonal decompositions of derived categories of representations}
    \label{section:semiorthogonaldecompositionsofderivedcategoriesofrepresentations}

Before tackling Grassmannians, we study the representation theory of linearly reductive groups and the corresponding equivariant Grothendieck-Witt theory.
Throughout this section, the notation of \cite{conrad14} will be used.
Recall that an algebraic group $G$ is \emph{linearly reductive} if it is reductive and every representation splits into a direct sum of irreducible representations.
Throughout this section, $G$ will be a {linearly reductive group} over a field $k$, which encompasses general linear groups, special linear groups and other important examples in characteristic zero, and split tori in arbitary characteristic.
The representation theory of linearly reductive groups is well-understood, and we will apply this understanding to $G$-equivariant Grothendieck-Witt theory.
To this end, it is necessary to determine how the standard semi-orthogonal decomposition of $\Perf^G(k)$ behaves under duality.

Let $k$ be a field of characteristic zero.
Let $(G, B, T)$ be a triple consisting of a connected split reductive group $G$ over $k$ with $T \subset B \subset G$, where $B$ is a Borel subgroup and $T$ is a maximal torus of rank $t$. 
Let $X(T) \cong \Z^t$ be the character lattice of $T$.
Let $\Phi = \Phi(G,T)$ be a root system and let $\Phi^+ = \Phi(B,T)$ be the system of positive roots associated to $B$ as in \cite[Proposition 1.4.4]{conrad14} and let $\Delta \subset \Phi^+$ be the set of simple positive roots.
Let $W = W_G(T) = W(\Phi)$ be the Weyl group, with longest element $w_0 \in W$, which is the unique element such that $w_0(\Phi^+) = -\Phi^+$.
Finally, let
\begin{equation*}
    C = \left\{\lambda \in X(T) \mid \langle \lambda, a^{\vee} \rangle \geq 0 \tn{ for all }a \in \Delta \right\}
\end{equation*}
be a closed Weyl chamber, that is, a set of dominant weights.
The orbit of $C$ under the action of $W$ is $X(T)$. 

\begin{proposition} \label{proposition:semiorthogonaldecompositionequivariantperfectcomplexes}
The pretriangulated dg category $\mc{A} = \Perf^G(k)$ admits a semi-orthogonal decomposition
\begin{equation*}
    \mc{A} = \langle \mc{A}_{\lambda} \mid \lambda \in C \rangle,
\end{equation*}
where $\mc{A}_{\lambda} = \langle M_{\lambda} \rangle$ is generated by the irreducible representation $M_{\lambda}$ of highest weight $\lambda$.
Furthermore, $\mc{A}_{\lambda} \simeq \Perf(k)$ as dg categories.
\end{proposition}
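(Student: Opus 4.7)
The plan is to exploit the fact that in characteristic zero, the category $\Rep(G)$ of finite-dimensional $G$-representations is semisimple, and that the simples are precisely the irreducibles $M_\lambda$ indexed by $\lambda \in C$ via highest-weight theory for split reductive groups.

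First I would identify $\mc{A}_\lambda$ concretely. By Schur's lemma, since $G$ is split over $k$, the representation $M_\lambda$ is absolutely irreducible, so $\End_G(M_\lambda) = k$ and $\Hom_G(M_\lambda, M_\mu) = 0$ for $\lambda \neq \mu$ in $C$. The tensor functor
\begin{equation*}
    - \otimes_k M_\lambda \colon \Perf(k) \longrightarrow \Perf^G(k)
\end{equation*}
is fully faithful with essential image the thick subcategory $\langle M_\lambda \rangle = \mc{A}_\lambda$, which yields the claimed equivalence $\mc{A}_\lambda \simeq \Perf(k)$.

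Next I would verify the semi-orthogonality and generation. For the Hom-vanishing between $\mc{A}_\lambda$ and $\mc{A}_\mu$ with $\lambda \neq \mu$: any morphism in $\mc{A}$ between $V \otimes M_\lambda$ and $W \otimes M_\mu$ (for $V, W \in \Perf(k)$) is computed by $V^\vee \otimes W \otimes \Hom_G(M_\lambda, M_\mu) = 0$, so in fact the decomposition is \emph{orthogonal}, which is a fortiori semi-orthogonal for any chosen total ordering of $C$. For generation, I would invoke linear reductivity: in characteristic zero, $\Rep(G)$ is semisimple, so every finite-dimensional $G$-representation splits as $\bigoplus_{\lambda \in C} V_\lambda \otimes M_\lambda$ for finite-dimensional multiplicity vector spaces $V_\lambda$. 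An object of $\Perf^G(k)$ is a bounded complex of finite-dimensional representations; since its differentials respect the isotypic decomposition and $\Ext^1_G(M_\lambda, M_\mu) = 0$ (again by semisimplicity), such a complex is quasi-isomorphic to $\bigoplus_\lambda V_\lambda^\bullet \otimes M_\lambda$ for $V_\lambda^\bullet \in \Perf(k)$, showing that the $\mc{A}_\lambda$ generate $\mc{A}$.

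The only step that requires a bit of care is the formality argument for perfect complexes in the last paragraph, i.e. that an equivariant perfect complex genuinely splits as a sum over its isotypic components at the level of the dg category, rather than merely after taking cohomology. This follows from semisimplicity of $\Rep(G)$ together with the Hom-vanishing, but is worth stating explicitly. Once this is in place, the semi-orthogonal decomposition and the identification $\mc{A}_\lambda \simeq \Perf(k)$ are immediate.
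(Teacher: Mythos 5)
Your proposal is correct and follows essentially the same route as the paper: Schur's lemma and absolute irreducibility give $\End_G(M_\lambda)=k$, making $-\otimes_k M_\lambda$ an equivalence $\Perf(k)\simeq \mc{A}_\lambda$, while semisimplicity of $\Rep(G)$ in characteristic zero gives both the (orthogonal) Hom- and Ext-vanishing and generation. Your final formality step, splitting a perfect complex into isotypic pieces, is more than is needed: since a bounded complex is built from its terms by shifts and cones, it suffices (as the paper argues) that each finite-dimensional representation decomposes into irreducibles lying in the various $\mc{A}_\lambda$.
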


\begin{proof}
By the proof of \cite[Lemme 6]{serre68} (or the theorem of the highest weight \cite[Theorem 1.5.6]{conrad14}), isomorphism classes of irreducible representations of $G$ correspond bijectively to elements of $C$.
Therefore, let $\{M_{\lambda} \mid {\lambda} \in C\}$ be a set of representatives, and $\mc{A}_{\lambda} = \langle M_{\lambda} \rangle$ the full pretriangulated subcategory of $\mc{A}$ generated by $M_{\lambda}$.
Let
\begin{equation*}
    F_{\lambda}: \Perf(k) \lra \mc{A}_{\lambda}
\end{equation*}
be given by $F_{\lambda}(N) = N \otimes_k M_{\lambda}$, where $N$ is equipped with the trivial $G$-action.
The proof of \cite[Lemme 5]{serre68} shows that $M_{\lambda}$ is absolutely irreducible, so $\End^G_k(M_{\lambda}) \cong k$. 
It follows that $F_{\lambda}$ is fully faithful and essentially surjective.
Because $M_{\lambda}$ is flat, $F_{\lambda}$ is also exact.
Hence $\mc{A}_{\lambda} \simeq \Perf(k)$ as pretriangulated dg categories.

Let $\lambda, \mu \in C$ such that $\lambda \neq \mu$ and let $f: M_{\lambda} \ra M_{\mu}$ be a morphism in $\mc{A}$. 
By Schur's lemma $f=0$, since $M_{\lambda}$ and $M_{\mu}$ are not isomorphic.
Hence $\Hom(\mc{A}_{\lambda}, \mc{A}_{\mu}) = 0$.
Furthermore, since every representation of $G$ splits into irreducible representations, $\Ext^i(M_{\lambda}, M_{\mu}) = 0$ for all $i > 0$. 

It remains to be shown that the $\mc{A}_{\lambda}$ generate $\mc{A}$.
Let $M$ be a finite dimensional $G$-representation.
Then each irreducible $G$-representation in the decomposition of $M$ is contained in one of the $\mc{A}_{\lambda}$, whence $M \in \langle \mc{A}_{\lambda} \mid \lambda \in C \rangle$, which finishes the proof.
\end{proof}

\begin{example} \label{example:nontrivialextensiongenerallineargroupofdegreetwo}
Here is an example (c.f. \cite[Example 1.2]{buchweitz15}) that shows that the group $\Ext^1(A,B)$ does not always vanish for irreducible representations $A$ and $B$ of $G$ when $k$ is a field of nonzero characteristic.
Let $k = \FF_2$, $G = \GL_2$, and let $V$ be the standard two-dimensional representation of $G$.
Then $\Lambda^2 V$ and $\Sym^2 V$ are non-isomorphic irreducible $G$-representations, and the exact sequence
\begin{equation*}
    0 \lra \Lambda^2 V \lra V \otimes V \lra \Sym^2 V \lra 0
\end{equation*}
does not split.
Hence $\Ext^1(\Sym^2 V, \Lambda^2 V) \neq 0$. 
\end{example}

The non-vanishing of Ext-groups of irreducible representations is the most important obstruction to a proof of proposition \ref{proposition:semiorthogonaldecompositionequivariantperfectcomplexes} in arbitrary characteristic.

\begin{corollary} \label{corollary:equivariantktheoryreductivegroup}
The $G$-equivariant $\K$-theory of $k$ is given by
\begin{equation*}
    \K^G_i(k) \cong \bigoplus_{\lambda \in C} K_i(k) \cong K^G_0(k) \otimes_{\Z} K_i(k)
\end{equation*}
for all $i \in \N$. 
\end{corollary}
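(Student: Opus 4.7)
The plan is to deduce the corollary as a direct application of Proposition \ref{proposition:semiorthogonaldecompositionequivariantperfectcomplexes} together with the additivity of algebraic $\K$-theory on semi-orthogonal decompositions of pretriangulated dg categories. Since $G$-equivariant $\K$-theory of $k$ is by definition the $\K$-theory of the dg category $\Perf^G(k) = \mc{A}$, and Proposition \ref{proposition:semiorthogonaldecompositionequivariantperfectcomplexes} supplies a semi-orthogonal decomposition $\mc{A} = \langle \mc{A}_\lambda \mid \lambda \in C\rangle$ with $\mc{A}_\lambda \simeq \Perf(k)$, the computation reduces to splitting $\K$-theory along this decomposition and identifying each summand with $\K_i(k)$.

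Concretely, I would first invoke the standard fact that for a pretriangulated dg category with a (possibly infinite) semi-orthogonal decomposition, the $\K$-theory spectrum splits as a direct sum indexed by the components; this gives
\begin{equation*}
    \K_i^G(k) = \K_i(\mc{A}) \cong \bigoplus_{\lambda \in C} \K_i(\mc{A}_\lambda).
\end{equation*}
Then I would use the equivalence $\mc{A}_\lambda \simeq \Perf(k)$ established in Proposition \ref{proposition:semiorthogonaldecompositionequivariantperfectcomplexes} to identify each $\K_i(\mc{A}_\lambda)$ with $\K_i(k)$, yielding the first isomorphism
\begin{equation*}
    \K_i^G(k) \cong \bigoplus_{\lambda \in C} \K_i(k).
\end{equation*}

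For the second isomorphism, I would specialize the first to $i = 0$ to obtain $\K_0^G(k) \cong \bigoplus_{\lambda \in C} \Z$, the free abelian group on the dominant weights, which is of course the representation ring of $G$. Tensoring over $\Z$ then gives $\K_0^G(k) \otimes_\Z \K_i(k) \cong \bigoplus_{\lambda \in C} \K_i(k)$, matching the first formula. The identification is compatible with the natural module structure, since the generator indexed by $\lambda$ on the left corresponds to the class of $M_\lambda$ in $\K_0^G(k)$ and the tensor product $M_\lambda \otimes_k (-)$ is precisely the functor $F_\lambda$ from the proof of the preceding proposition.

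There is not really a substantial obstacle here; the only point one has to verify carefully is that the additivity result for $\K$-theory genuinely applies to the semi-orthogonal decomposition at hand, including the case where $C$ is infinite. This is fine because the decomposition is built levelwise on objects that each lie in the span of finitely many $\mc{A}_\lambda$, so the direct sum in the conclusion is the expected one on homotopy groups.
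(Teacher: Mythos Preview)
Your argument is correct and matches the paper's own proof, which likewise derives the corollary from the semi-orthogonal decomposition of Proposition~\ref{proposition:semiorthogonaldecompositionequivariantperfectcomplexes} together with additivity for $\K$-theory. Your added explanation of the second isomorphism via $\K_0^G(k) \cong \bigoplus_{\lambda \in C} \Z$ is a reasonable elaboration of a point the paper leaves implicit.
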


\begin{proof}
This follows from additivity for $\K$-theory \cite[Proposition~7.10]{blumberg13} and the semi-orthogonal decomposition of proposition \ref{proposition:semiorthogonaldecompositionequivariantperfectcomplexes}. 
\end{proof}

Recall the theorem of the highest weight \cite[Theorem 1.5.6]{conrad14}, which states that every irreducible $G$-representation has a unique highest weight.
We want to describe the duality on $\Perf^G(k)$, as well as its symmetric objects.
The following lemma is folklore.

\begin{lemma} \label{lemma:weightofdualrepresentation}
Let $V$ be an irreducible $G$-representation with highest weight $\lambda$.
Then the dual representation $V^{\vee}$ has highest weight $-w_0\lambda$. 
\end{lemma}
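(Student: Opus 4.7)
The plan is to compute the weights of $V^{\vee}$ explicitly and pick out the unique dominant one.

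First, I would choose a basis $v_1, \ldots, v_n$ of $V$ consisting of $T$-weight vectors with weights $\mu_1, \ldots, \mu_n \in X(T)$. Unwinding the definition $(t \cdot f)(v) = f(t^{-1}v)$ of the dual representation, the dual basis $v_1^{\vee}, \ldots, v_n^{\vee}$ satisfies
\begin{equation*}
    (t \cdot v_i^{\vee})(v_j) = v_i^{\vee}(t^{-1}v_j) = \mu_j(t)^{-1}\delta_{ij},
\end{equation*}
so $v_i^{\vee}$ is a weight vector of weight $-\mu_i$. Hence the multiset of weights of $V^{\vee}$ is exactly the negatives of those of $V$, and the problem reduces to locating the \emph{lowest} weight of $V$ in the dominance order on $X(T)$ and negating it.

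Next, I would identify this lowest weight as $w_0\lambda$. The set of weights of $V$ is $W$-stable, so $w_0\lambda$ does occur as a weight. For any other weight $\mu$ of $V$, the characterisation of the highest weight applied to $w_0^{-1}\mu$ (which is again a weight, by $W$-stability) gives $\lambda - w_0^{-1}\mu \in \N\Phi^+$; applying $w_0$ and using $w_0(\Phi^+) = -\Phi^+$ yields $w_0\lambda - \mu \in -\N\Phi^+$, that is, $\mu \geq w_0\lambda$. Combined with the first step, this shows that the highest weight of $V^{\vee}$ is $-w_0\lambda$.

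It remains to verify that $-w_0\lambda$ actually lies in the closed Weyl chamber $C$; but $w_0$ is the unique element of $W$ sending $C$ to $-C$, so $w_0\lambda \in -C$ and hence $-w_0\lambda \in C$. The argument involves no serious obstacle beyond keeping the $W$-action and the signs straight: the only subtle point is that one must apply $w_0$ to the inequality $\lambda - w_0^{-1}\mu \in \N\Phi^+$ (rather than to $\lambda - \mu$ directly) in order to exploit $W$-invariance of the weight set.
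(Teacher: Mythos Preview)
Your proof is correct and follows essentially the same route as the paper's: both use that $\Omega_{V^{\vee}} = -\Omega_V$, that the weight set is $W$-stable, and that $w_0(\Phi^+) = -\Phi^+$ to show every weight of $V^{\vee}$ lies below $-w_0\lambda$ in the dominance order. The only cosmetic difference is that you phrase the argument as ``identify $w_0\lambda$ as the lowest weight of $V$, then negate'', whereas the paper applies $w_0$ directly to the description $\Omega_{V^{\vee}} = \{-\lambda + \sum_{a \in \Delta} m_a a\}$ and then rewrites the resulting elements of $w_0(\Delta)$ as negative sums of simple roots; the underlying computation is the same. Your explicit verification that $-w_0\lambda \in C$ is a small bonus the paper leaves implicit.
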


\begin{proof}
Let $\Omega_V \subset X(T)$ be the set of weights of $V$.
Note that $\Omega_{V^{\vee}} = -\Omega_{V}$.
As $\lambda$ is the highest weight of $V$ and $\Omega_{V^{\vee}}$ is $W$-invariant, all weights in $\Omega_{V^{\vee}}$ are of the form
\begin{equation*}
    - w_0\lambda + \sum_{b \in w_0(\Delta)} m_b b
\end{equation*}
with $m_b \in \Z_{\geq 0}$. 
Note that $w_0(\Delta) \subset -\Phi^+$, so for $b \in w_0(\Delta)$ and $m_b \in \Z_{\geq 0}$,
\begin{equation*}
    m_b b = - \sum_{a \in \Delta} n_a a
\end{equation*}
with $n_a \in \Z_{\geq 0}$, as $\Delta$ is a base for $\Phi^+$. 
Hence all weights in $\Omega_{V^{\vee}}$ are of the form
\begin{equation*}
    - w_0\lambda - \sum_{a \in \Delta} n_a a,
\end{equation*}
and it follows that $-w_0\lambda$ is the highest weight of $V^{\vee}$, as was to be shown.
\end{proof}

The following corollary is a direct consequence of the above lemma.

\begin{corollary} \label{corollary:actionofdualityonorthogonaldecompositionofrepresentations}
The duality functor $*: \mc{A} \ra \mc{A}$ on $\mc{A} = \Perf^G(k)$ sends $\mc{A}_a$ to $\mc{A}_{-w_0a}$, where $a \in C$.
\end{corollary}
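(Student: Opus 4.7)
The plan is to deduce this essentially as a formal consequence of Lemma \ref{lemma:weightofdualrepresentation}. Since $\mc{A}_a = \langle M_a \rangle$ is the pretriangulated subcategory generated by a single object, and the duality functor $*$ is an exact contravariant autoequivalence of $\mc{A}$, the image $*(\mc{A}_a)$ is the pretriangulated subcategory generated by $M_a^*$. So it suffices to identify $M_a^*$ up to isomorphism with some $M_b$ for $b \in C$, and then Proposition \ref{proposition:semiorthogonaldecompositionequivariantperfectcomplexes} gives $\langle M_b \rangle = \mc{A}_b$.

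First I would note that $M_a^*$ is again an irreducible $G$-representation: since $*$ is an autoequivalence, it sends simple objects in $\Rep(G)$ to simple objects, so the lattice of subrepresentations of $M_a^*$ is dual to that of $M_a$ and hence has only the trivial and total subrepresentation. Then Lemma \ref{lemma:weightofdualrepresentation} immediately gives that $M_a^*$ has highest weight $-w_0 a$, so by the theorem of the highest weight $M_a^* \cong M_{-w_0 a}$.

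The one verification still required is that $-w_0 a$ actually lies in $C$, so that indexing $\mc{A}_{-w_0 a}$ makes sense. For this I would observe that $-w_0$ stabilizes $C$: for any simple root $a' \in \Delta$ and dominant weight $a \in C$, one has
\begin{equation*}
    \langle -w_0 a, a'^{\vee} \rangle = \langle a, -w_0^{-1}(a'^{\vee}) \rangle,
\end{equation*}
and since $w_0$ sends $\Phi^+$ to $-\Phi^+$ while the map $a' \mapsto -w_0(a')$ permutes $\Delta$, the element $-w_0^{-1}(a'^{\vee})$ is again a simple coroot; hence the pairing is non-negative.

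The only potentially subtle point is the dg enhancement: one should check that the pretriangulated subcategories $\mc{A}_a$ and $\mc{A}_{-w_0 a}$ genuinely correspond under $*$, not merely that their generators do. But since $*$ is an exact equivalence of pretriangulated dg categories and $\langle M_a \rangle$, $\langle M_{-w_0 a} \rangle$ are defined as the smallest such subcategories containing these generators, this is automatic. The main substance of the argument therefore lies entirely in Lemma \ref{lemma:weightofdualrepresentation}, which is already established.
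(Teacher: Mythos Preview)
Your proposal is correct and follows exactly the approach the paper intends: the paper states this corollary as ``a direct consequence of the above lemma'' (Lemma~\ref{lemma:weightofdualrepresentation}) without further proof, and your argument simply makes explicit the routine verifications (irreducibility of the dual, $-w_0 a \in C$, compatibility with the dg structure) that the paper leaves implicit.
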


\begin{definition} \label{definition:selfdualrepresentation}
We call a representation $E$ of $G$ \emph{self-dual} if there exists an isomorphism $\phi: E \ra E^{\vee}$. 
We call a self-dual $E$ \emph{symmetric} (resp. \emph{anti-symmetric}) if $\phi$ can be chosen to be a symmetric (resp. anti-symmetric) form.
Note that $E$ may be both symmetric and anti-symmetric.
\end{definition}

The following lemma combines \cite[Lemma 1.21]{calmes05} and \cite[Proposition 2.2]{calmes05}.

\begin{lemma} \label{lemma:selfdualsimpleissymmetricorantisymmetric}
Let $M$ be an irreducible representation of $G$. 
If there exists an isomorphism $\phi: M \ra M^{\vee}$, then $\phi$ is either symmetric or anti-symmetric and any isomorphism $M \ra M^{\vee}$ is a scalar multiple of $\phi$.
\end{lemma}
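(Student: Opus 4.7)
The plan is to derive both claims from Schur's lemma applied to the absolutely irreducible representation $M$. Recall from the proof of Proposition \ref{proposition:semiorthogonaldecompositionequivariantperfectcomplexes} (via \cite[Lemme 5]{serre68}) that $\End_k^G(M) = k$, so in particular $M$ is absolutely irreducible.

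First I would settle uniqueness up to scalar. Suppose $\phi, \psi : M \to M^{\vee}$ are two isomorphisms of $G$-representations. Then $\psi^{-1} \circ \phi : M \to M$ is a $G$-equivariant automorphism, so by absolute irreducibility it equals $c \cdot \id_M$ for some $c \in k^{\times}$. Hence $\phi = c\psi$, proving the second claim. Equivalently, $\Hom_G(M, M^{\vee})$ is one-dimensional, spanned by $\phi$.

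Next I would analyse the symmetry type of $\phi$. Using the canonical double-duality isomorphism $\mathrm{ev}_M : M \xrightarrow{\sim} M^{\vee\vee}$, define the transpose
\begin{equation*}
    \phi^{\vee} : M \xrightarrow{\mathrm{ev}_M} M^{\vee\vee} \xrightarrow{\phi^{\vee}} M^{\vee},
\end{equation*}
which is again a $G$-equivariant isomorphism. By the uniqueness just established, there is $c \in k^{\times}$ with $\phi^{\vee} = c\phi$. Dualising this equality once more and using $(\phi^{\vee})^{\vee} = \phi$ (under the natural identification $\mathrm{ev}_M$), one gets $\phi = c \phi^{\vee} = c^2 \phi$, hence $c^2 = 1$ and $c = \pm 1$. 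Thus $\phi^{\vee} = \phi$ (so $\phi$ is symmetric) or $\phi^{\vee} = -\phi$ (so $\phi$ is anti-symmetric).

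The only delicate point is keeping the identifications via $\mathrm{ev}_M$ clean so that the equation $(\phi^{\vee})^{\vee} = \phi$ really holds on the nose; this is routine but must be spelled out carefully to justify that $c^2 = 1$. Since the ambient field has characteristic zero (and in any case char $\neq 2$ would suffice here), the two solutions $c = \pm 1$ genuinely separate the symmetric and anti-symmetric cases, giving the claimed dichotomy.
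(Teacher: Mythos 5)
Your proof is correct. The paper itself offers no argument for this lemma --- it simply defers to \cite[Lemma 1.21]{calmes05} and \cite[Proposition 2.2]{calmes05} --- and your Schur-lemma argument (one-dimensionality of $\Hom_G(M, M^{\vee})$ from $\End^G_k(M) \cong k$, then $\phi^{\vee} \circ \mathrm{ev}_M = c\,\phi$ with $c^2 = 1$ by dualizing again) is precisely the standard proof of those cited statements, so it adequately fills the gap. The only step to spell out is the identity $(\phi^{\vee} \circ \mathrm{ev}_M)^{\vee} \circ \mathrm{ev}_M = \phi$, which follows from naturality of the evaluation map together with $(\mathrm{ev}_M)^{\vee} \circ \mathrm{ev}_{M^{\vee}} = \mathrm{id}_{M^{\vee}}$; with that in hand $c = \pm 1$, and since the base field has characteristic zero the two cases genuinely give the symmetric/anti-symmetric dichotomy claimed.
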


By \cite[Corollary 2.3]{calmes05}, a self-dual irreducible representation is always either symmetric or anti-symmetric, but never both.
By lemma \ref{lemma:weightofdualrepresentation}, the following definition is sensible.

\begin{definition} \label{definition:self-dualcharacter}
A character $x \in X(T)$ is called \emph{self-dual} if $x = -w_0x$, where $w_0$ is the longest element of the Weyl group $W$. 
\end{definition}

The following proposition is a reformulation of \cite[Proposition 4.4]{hemmert22} in the language of algebraic geometry.

\begin{proposition} \label{proposition:signofrepresentationbyparity}
Let $V$ be a self-dual $G$-representation of highest weight $\lambda$.
Let $2\rho^{\vee}$ be the sum of the positive coroots.
Then $V$ is symmetric if and only if $\langle 2\rho^{\vee}, \lambda \rangle$ is even and anti-symmetric if and only if $\langle 2\rho^{\vee}, \lambda \rangle$ is odd.
\end{proposition}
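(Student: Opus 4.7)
The plan is to deduce the proposition from \cite[Proposition 4.4]{hemmert22} by translating between the algebraic-geometric language used here and the classical language of semisimple Lie algebras in which the cited result is phrased.

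First, I would reduce to the case $k = \C$ by base change. The root datum $(X(T), \Phi, X(T)^\vee, \Phi^\vee)$, the Weyl group $W$, its longest element $w_0$, and the sum $2\rho^\vee$ of the positive coroots are all intrinsic to the split group $(G, B, T)$ and hence unchanged under extension of scalars. Self-duality of $V$ and the symmetric or anti-symmetric nature of the isomorphism $\phi : V \to V^\vee$ are preserved under base change, and they descend from $\bar k$ back to $k$ thanks to the uniqueness-up-to-scalar statement of lemma \ref{lemma:selfdualsimpleissymmetricorantisymmetric}: the sign is independent of the nonzero scalar chosen, so it is unambiguous before and after base change.

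Second, over $\C$ the category of rational representations of the connected split reductive group $G_{\C}$ is equivalent to the category of integrable representations of its complexified Lie algebra. Under this equivalence the highest weight, the Weyl group action on $X(T)$, and the pairing $\langle -, - \rangle$ between characters and cocharacters all correspond to their Lie-theoretic counterparts, and $G$-invariant bilinear forms on $V$ correspond to Lie-algebra-invariant forms with the same symmetry type. Thus \cite[Proposition 4.4]{hemmert22}, which states that the sign of the invariant form on an irreducible self-dual representation of highest weight $\lambda$ equals $(-1)^{\langle 2\rho^\vee, \lambda\rangle}$, directly gives the claim.

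The main subtlety I expect to address is that $G$ is only assumed reductive, so the center $Z(G)$ may contribute to $X(T)$ and $\lambda$ may have a nontrivial central component. However, every coroot pairs trivially with characters of $Z(G)$, so $\langle 2\rho^\vee, \lambda\rangle$ depends only on the semisimple part of $\lambda$, to which Hemmert's formula applies directly. This is consistent with the fact that the central character of a self-dual representation is forced to be $2$-torsion and therefore cannot affect the Frobenius-Schur sign.
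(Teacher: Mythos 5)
Your proposal is essentially correct, but it follows a genuinely different route from the paper. You deduce the statement from \cite[Proposition 4.4]{hemmert22} by supplying a dictionary: extension of scalars, the group--Lie-algebra correspondence, and the observation that coroots pair trivially with the central component of $\lambda$, so the reductive case reduces to the semisimple one; using Lemma \ref{lemma:selfdualsimpleissymmetricorantisymmetric} to see that the sign is unambiguous and survives base change is the right mechanism. The paper instead gives a self-contained argument inside the split-group setting: it takes a homomorphism $\phi\colon \SL_2 \ra G$ restricting on maximal tori to the cocharacter $2\rho^{\vee}$, notes that $\phi^*$ preserves the sign and that only odd-multiplicity irreducible summands of $\phi^*V$ are relevant, and then checks by a weight count (the $\lambda$-weight space is one-dimensional, and every other weight of $V$ pairs strictly smaller against $2\rho^{\vee}$) that the $\SL_2$-irreducible of highest weight $\langle 2\rho^{\vee},\lambda\rangle\omega$ occurs exactly once; its sign, given by the parity of $\langle 2\rho^{\vee},\lambda\rangle$, is therefore the sign of $V$. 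The paper's route buys independence from the precise setting and normalizations of the cited result and avoids any change of field; yours buys brevity, at the cost of having to verify the dictionary exactly (that Hemmert's sign is the symmetric/anti-symmetric dichotomy used here and that his hypotheses cover this case). Two small repairs to your translation: an arbitrary field of characteristic zero need not embed into $\C$, so the base change should be to an algebraic closure (or one should invoke that representations of split reductive groups and their invariant forms are insensitive to extensions of characteristic-zero fields); and for non-semisimple $G$ the passage should be phrased as restriction to the derived subalgebra, where $V$ remains irreducible because the centre acts by scalars, so the $G$-invariant form is, up to scalar, the unique invariant form to which the Lie-theoretic formula applies.
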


\begin{proof}
Let $\phi: \SL_2 \ra G$ be a homomorphism that restricts to $2\rho^{\vee}: \GG_m \ra T$ on the respective maximal tori $\GG_m \subset \SL_2$ and $T \subset G$ (cf. \cite[Theorem 1.2.7, Definition 1.2.8]{conrad14}). 
Since $\phi^*: \Rep(G) \ra \Rep(\SL_2)$ preserves the sign of a representation, all of the irreducible components of $\phi^*V$ that occur with odd multiplicity must have the same sign, and it suffices to find a single irreducible component of $\phi^*V$ with odd multiplicity.
Indeed, a direct sum of an even number of isomorphic irreducible components can be equipped with both a symmetric and an anti-symmetric form, and therefore tells us nothing about the sign of a representation.
The $\lambda$-weight space $W \subset V$ is one-dimensional by the theorem of the highest weight \cite[Theorem 1.5.6]{conrad14}.
Let $\omega$ be the fundamental weight of $\SL_2$. 
Then $\phi^*W \subset \phi^*V$ is a one-dimensional weight space for $\langle 2\rho^{\vee}, \lambda \rangle \omega$.
If $W' \subset V$ is a weight space for another weight $\lambda'$, then $\lambda' = \lambda - \beta$ for some positive root $\beta \in \Phi^+$, again by the theorem of the highest weight.
By construction, $\langle 2\rho^{\vee}, \alpha \rangle = 2$ for all simple roots $\alpha$, so in particular $\langle 2\rho^{\vee}, \beta \rangle > 0$.
Thus $\phi^*W'$ is a weight space for the weight
\begin{equation*}
    (\langle 2\rho^{\vee}, \lambda \rangle - \langle 2\rho^{\vee}, \beta \rangle)\omega < \langle 2\rho^{\vee}, \lambda \rangle \omega,
\end{equation*}
and it follows that the decomposition of $\phi^*V$ into irreducible representations contains precisely one copy of the irreducible $\SL_2$-representation of weight $\langle 2\rho^{\vee}, \lambda \rangle \omega$, which is symmetric if and only if $\langle 2\rho^{\vee}, \lambda \rangle$ is even and anti-symmetric if and only if $\langle 2\rho^{\vee}, \lambda \rangle$ is odd. 

It follows that $V$ is symmetric if and only if $\langle 2\rho^{\vee}, \lambda \rangle$ is even and anti-symmetric if and only if $\langle 2\rho^{\vee}, \lambda \rangle$ is odd, as was to be shown.
\end{proof}

Let $C_0 \subset C$ be the set of dominant weights fixed by $-w_0$.
By lemma \ref{lemma:selfdualsimpleissymmetricorantisymmetric}, $C_0 = C_0^+ \sqcup C_0^-$, where $C_0^+$ is the set of weights $\lambda$ such that $M_{\lambda}$ admits a symmetric form, and $C_0^-$ the set of weights $\lambda$ such that $M_{\lambda}$ admits an anti-symmetric form.
Let $D$ be a set of representatives for the set of orbits $(C - C_0)/(-w_0)$ of elements of $C$ that are not fixed by $-w_0$. 

It follows that there are three possibilities for the sign of an irreducible representation $M_{\lambda}$; it is either symmetric, anti-symmetric, or not self-dual at all. 
If $M_{\lambda}$ is anti-symmetric and we equip it with a specific anti-symmetric form $\phi_{\lambda}: M_{\lambda} \ra M_{\lambda}^{\vee}$, this yields an element $[M_{\lambda}, \phi_{\lambda}] \in \GW^{[2]}_0(\Perf^G(k))$. 
Hence
\begin{equation} \label{equation:antisymmetricgw}
    \GW^{[n]}(\mc{A}_{\lambda}) \simeq \GW^{[n+2]}(\Perf^G(k))
\end{equation}
for $\lambda$ of sign $-1$. 

\begin{proposition} \label{proposition:equivariantgwtheoryreductivegroup}
Let $n \in \Z$.
With notation as before, the $G$-equivariant $\GW$-theory of $k$ is given by
\begin{equation*}
    \GW^{[n]}_{G,i}(k) \cong \bigoplus_{\lambda^+ \in C_0^+} \GW^{[n]}_i(k) \oplus \bigoplus_{\lambda^- \in C_0^-} \GW^{[n+2]}_i(k) \oplus \bigoplus_{\lambda \in D} \K_i(k).
\end{equation*}
\end{proposition}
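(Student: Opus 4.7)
The plan is to apply a Karoubi-style additivity theorem for Grothendieck-Witt theory to the semi-orthogonal decomposition of $\mc{A} = \Perf^G(k)$ from Proposition \ref{proposition:semiorthogonaldecompositionequivariantperfectcomplexes}, tracking how the duality permutes its components via Corollary \ref{corollary:actionofdualityonorthogonaldecompositionofrepresentations}. The action $\lambda \mapsto -w_0\lambda$ on the indexing set yields the partition
\begin{equation*}
    C = C_0^+ \sqcup C_0^- \sqcup D \sqcup (-w_0)(D),
\end{equation*}
so that every orbit is either a singleton $\{\lambda\}$ with $\lambda \in C_0$ (giving a $*$-preserved component $\mc{A}_\lambda$, equipped with a sign $\pm 1$ by Lemma \ref{lemma:selfdualsimpleissymmetricorantisymmetric}) or a two-element orbit $\{\lambda, -w_0\lambda\}$ with $\lambda \in D$ (giving a swapped pair of components).

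Invoking the additivity theorem for Poincaré dg categories along a duality-permuted semi-orthogonal decomposition (see \cite{calmes2020hermitian2}) then produces an equivalence
\begin{equation*}
    \GW^{[n]}(\mc{A}) \simeq \bigoplus_{\lambda \in C_0} \GW^{[n]}(\mc{A}_\lambda) \oplus \bigoplus_{\lambda \in D} \K(\mc{A}_\lambda),
\end{equation*}
where each swapped pair $\{\mc{A}_\lambda, \mc{A}_{-w_0\lambda}\}$ collapses to a single copy of $\K(\mc{A}_\lambda)$ via the hyperbolic identification. Since $\mc{A}_\lambda \simeq \Perf(k)$ by Proposition \ref{proposition:semiorthogonaldecompositionequivariantperfectcomplexes}, passing to $\pi_i$ turns each swapped-pair summand into $\K_i(k)$.

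For the fixed components I would identify the induced Poincaré structures explicitly: for $\lambda \in C_0^+$ the restricted duality on $\mc{A}_\lambda \simeq \Perf(k)$ is symmetric (equivalently, $M_\lambda$ serves as a unit object for the standard Poincaré structure), so the contribution is $\GW^{[n]}_i(k)$; for $\lambda \in C_0^-$ the canonical anti-symmetric form $\phi_\lambda$ twists the Poincaré structure by a sign, and the identification (\ref{equation:antisymmetricgw}) rewrites the contribution as $\GW^{[n+2]}_i(k)$. Assembling the three families of summands yields exactly the claimed formula.

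The main obstacle is the duality-permuted additivity step: for the fixed components this reduces to ordinary Karoubi additivity for GW, but the fact that the two components of a swapped pair combine to give just $\K$-theory (rather than a direct sum of two GW-spectra) relies on the hyperbolic/forgetful interplay encoded in the Poincaré structure. Once this is isolated in the formalism of Poincaré dg categories, the sign-dependent shift for the fixed components follows directly from Lemma \ref{lemma:selfdualsimpleissymmetricorantisymmetric} and equation (\ref{equation:antisymmetricgw}), and the remaining bookkeeping is routine.
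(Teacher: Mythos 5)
Your proposal is correct and follows essentially the same route as the paper: the proof there likewise combines the duality action on the decomposition (Corollary \ref{corollary:actionofdualityonorthogonaldecompositionofrepresentations}), the shift of equation (\ref{equation:antisymmetricgw}) for the anti-symmetric components, and additivity for the $\GW$-spectrum applied to the duality-permuted semi-orthogonal decomposition, with swapped pairs contributing $\K(k)$ and fixed components contributing $\GW^{[n]}(k)$ or $\GW^{[n+2]}(k)$ according to sign. The only cosmetic difference is that you invoke the additivity theorem in the Poincar\'e-category formulation of \cite{calmes2020hermitian2}, whereas the paper cites Schlichting's dg-categorical version \cite[Proposition~6.8]{schlichting17}; the argument is otherwise identical.
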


\begin{proof}
This follows from corollary \ref{corollary:actionofdualityonorthogonaldecompositionofrepresentations}, equation (\ref{equation:antisymmetricgw}) and additivity for the $\GW$-spectrum \cite[Proposition~6.8]{schlichting17}.
\end{proof}

\begin{example} \label{example:representationtheoryofgenerallineargroup}
Let $G = \GL_n$. 
The maximal torus of diagonal matrices has weight lattice $\Z^n$ with standard unit vectors $e_i$.
The roots $\Phi = \Phi(G,T)$ of $G$ are all elements of the form $e_i - e_j$ with $1 \leq i,j \leq n$ and $i \neq j$. 
Letting $B \subset G$ be the Borel subgroup of upper triangular matrices, the positive roots $\Phi^+ = \Phi(B,T)$ are all elements $e_i - e_j$ with $i < j$, and the simple roots $\Delta$ are the elements $e_i - e_{i+1}$. 
The Weyl group $W$ is generated by the reflections $s_{ij}: \Z^n \ra \Z^n$ that switch $e_i$ and $e_j$. 
The longest element $w_0 \in W$ with respect to $\Phi^+$ is the product
\begin{equation*}
    w_0 = \prod_{i < j} s_{ij}.
\end{equation*}
For example, if $n = 3$, then $w_0 = s_{12}s_{13}s_{23} = s_{13}$, and if $n = 4$, then $w_0 = s_{14}s_{23}$.
In general, there is an isomorphism $W \ra S_n$, $s_{ij} \mapsto (i~j)$, and under this isomorphism
\begin{equation*}
    w_0 = (1~n)(2~n-1)\cdots
\end{equation*}
is the order reversing permutation.
If $n$ is odd, there is a middle number which is fixed by $w_0$. 
The dominant weights of $G$ are given by
\begin{equation*}
    C = \left\{\sum_{i=1}^n m_ie_i \mid m_i \in \Z, m_i \geq m_{i+1}\right\},
\end{equation*}
which we can think of as \emph{partitions} when $m_1 \geq 0$.
This provides a useful connection to Young diagrams that we will use in Section \ref{section:hermitianktheoryspectraofgrassmannians}.
The $-w_0$-fixed points $C_0 \subset C$ are weights of the form
\begin{equation*}
    \lambda_0 = \sum_{i=1}^{\lceil n/2 \rceil}m_i e_i - \sum_{i=\lfloor n/2 \rfloor + 1}^{n}m_{n+1-i} e_i, 
\end{equation*}
still satisfying $m_i \geq m_{i+1}$.
Note that the positive coroots are elements of the form $e_i^{\vee} - e_j^{\vee}$, so the sum $2\rho^{\vee}$ of the positive coroots is 
\begin{equation*}
    \sum_{i<j}e_i^{\vee} + e_j^{\vee} = \sum_{i=1}^{\lfloor n/2 \rfloor}2ni.
\end{equation*}
In particular, the sum is even, so for any self-dual dominant weight $\lambda$, $\langle 2\rho^{\vee}, \lambda \rangle$ is even.
Hence, by Proposition \ref{proposition:signofrepresentationbyparity} all the self-dual irreducible representations of $G$ are symmetric, i.e. $C_0 = C_0^+$.
\end{example}

\section{Hermitian K-theory spectra of Grassmannians}
    \label{section:hermitianktheoryspectraofgrassmannians}

After computing the cohomology of projective spaces, one can take a few different directions for further computations in algebraic geometry, but perhaps the most natural generalization is to consider Grassmannians. 
Projective spaces are themselves edge cases of Grassmannians, and their combinatorics generalizes to other Grassmannians through the theory of \emph{Young diagrams}.

In this chapter, we compute the Grothendieck-Witt spectra of a certain class of Grassmannians.
This will in turn enable us to compute the higher Grothendieck-Witt groups of the geometric classifying spaces $\Bgm{\GL_n}$ of general linear groups, which is an important step in proving a general Atiyah-Segal completion theorem for Grothendieck-Witt theory.
    
\subsection{Semi-orthogonal decompositions for Grassmannians}

Let $k$ be a field of characteristic zero. 
Let $n = d + e$ be some positive integer.
Let $X = \Gr(d,n)$ be the Grassmannian of $d$-dimensional subspaces in $k^n$, with tautological bundle $\U$ of rank $d$ and dual bundle $\T$ of rank $e$.
These bundles fit together in an exact sequence
\begin{equation*}
    0 \lra \T \lra \OO_X^{\oplus n} \lra \U \lra 0.
\end{equation*}
Let $\Delta = \det \U$ be the determinant of the tautological bundle.

We can replace $\Spec k$ and $k^n$ with a base scheme $S$ over $k$, and $k^n$ with a vector bundle $\mc{V}$ over $S$.
For now, we refrain from stating results in such generality to ensure a clear exposition.

We will follow \cite{buchweitz15}, and eventually expand on the results obtained there by introducing duality.
Since we are working in characteristic zero, we could have followed \cite{kapranov88} instead.
Write $X$ as the homogeneous space $G/P$, where $G = \GL_n$ and $P$ is the parabolic subgroup
\begin{equation*}
    \left(
        \begin{array}{cc}
            \GL_d & * \\
            0 & \GL_e 
        \end{array}
    \right),
\end{equation*}
which contains both the Levi subgroup $H = \GL_d \times \GL_e$ and the Borel subgroup $B$ of upper triangular matrices.
As usual, let $T \subset B$ be the maximal split torus of diagonal matrices.
From now on, we will write $G_1 = \GL_d$ and $G_2 = \GL_e$, as well as $T_i = G_i \cap T$ for $i = 1,2$. 
The theory of vector bundles on $X$ is intimately related to the representation theory of $G_1$, which we will exploit.
The character lattice of $T_1 = T \cap G_1$ is $\Z^d$, and its positive roots are of the form
\begin{equation*}
    (0, \ldots, 0, 1, 0, \ldots, 0, -1, 0 \ldots, 0).
\end{equation*}
There is a natural partial order $\leq$ on $T_1$ for which $\lambda \leq \mu$ if and only if $\mu - \lambda$ is a sum of positive roots. 
The dominant weights $\alpha$ of $G_1$ are non-increasing tuples $(\alpha_1, \dots, \alpha_d) \in \Z^d$. 
Dominant weights with non-negative entries correspond to \emph{partitions}, which can be visualized as Young diagrams.
The Young diagram of a partition $(\alpha_1, \dots, \alpha_d)$ has $d$ rows of blocks, where the $i$-th row consists of $\alpha_i$ blocks, see figure \ref{figure:youngdiagram} for an example.

\begin{figure}
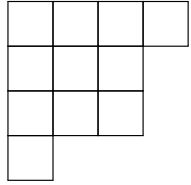

    \begin{equation*}
        \ydiagram{4,3,3,1}
    \end{equation*}    
    \caption{Young diagram for the partition $(4,3,3,1)$.}
    \label{figure:youngdiagram}
\end{figure}

Young diagrams, in turn, correspond to \emph{Schubert cells}, which are subschemes of Grassmannians that form cellular decompositions, and the intersection theory of these Schubert cells is called \emph{Schubert calculus}.
The Young diagrams considered here do \emph{not} correspond to the usual Schubert cells, but to twisted versions of these; here, the generator of $\mc{A}$ corresponding to a partition $\alpha$ is the Schur functor of $\alpha$ applied to the standard representation of $\GL_d$, twisted by a power of the determinant of the standard representation, as we will see later.

The \emph{degree} $|\alpha|$ of a weight $\alpha \in \Z^d$ is the sum $\sum \alpha_i$ of its entries. 
A representation has degree $m$ if all its weights have degree $m$, and a representation is called \emph{polynomial} if all its weights are partitions.
Let $V$ be the standard representation of $G_1$ of dimension $d$.
The determinant $\det V = \Lambda^d V$ of $V$ is an irreducible representation with highest weight $(1,\dots,1)$. 
For a partition $\alpha$ and a vector space (or $G_1$-representation) $W$, let 
\begin{equation*}
    \Lambda^{\alpha} W = \bigotimes_{i = 1}^d \Lambda^{\alpha_i} W,
\end{equation*}
and similarly when replacing $W$ with a vector bundle $\mc{E}$ on $X$.
Furthermore, each partition $\alpha$ allows the definition of the \emph{Schur functor $L^{\alpha}$} and the \emph{Weyl functor $K^{\alpha}$}, both of which are functors $\Vect(k) \ra \Rep(G_1)$. 

In particular, $L^{\alpha}V$ is the induced representation with highest weight $\alpha$, and since $k$ has characteristic zero, it is irreducible.
For the explicit constructions of the Schur and Weyl functors, see \cite[section 2]{buchweitz15}.
There is an equivalence of categories $\mc{L}_X: \Rep(P) \ra \Vect^G(X)$ by \cite[theorem 2.7]{cline83}.
Composing $\mc{L}_X$ with the inclusion $\Rep(G_1) \subset \Rep(P)$ induced by the projection $P \ra G_1$ allows us to study $G$-equivariant vector bundles on $X$ in terms of $G_1$-representations, for which we can use Example \ref{example:representationtheoryofgenerallineargroup}.

In representation theory and Schubert calculus, it seems to be customary to focus on polynomial representations, which is often justified by the remark that representations of $\GL_d$ can be tensored with the determinant representation with highest weight $(1,\dots, 1)$ to obtain polynomial representations, but the dual of a polynomial representation need not be polynomial.
We explain here how the determinant representation interacts with duality and how we can use this to obtain a satisfying theory of duality on Young diagrams, which makes a computation of the Grothendieck-Witt theory of Grassmannians possible.

Let $P_{d,e} \subset \Z^d$ be the set of partitions whose Young diagrams have at most $d$ rows and at most $e$ columns.
Such partitions correspond to the Schubert cells of $X$ and are visualized by Young diagrams that fit in a $(d \times e)$-frame.
For $\alpha \in P_{d,e}$, write $\alpha = (\alpha_1, \dots, \alpha_d)$ with $\alpha_{i} \geq \alpha_{i+1}$ for all $i=1, \dots, d-1$. 
For any $m \in \Z$ and $\alpha \in P_{d,e}$, we write $\underline{m}+\alpha$ for $(m+\alpha_1, \dots, m+\alpha_d) \in P_{d,e+m}$.

\begin{definition} \label{def:transposeofpartition}
Let $\alpha \in P_{d,e}$ be a partition.
The \emph{transpose $\alpha^{T}$ of $\alpha$} is the partition $\alpha^{T} \in P_{e,d}$ obtained by transposing the Young diagram of $\alpha$. 
\end{definition}

Since the Grassmannian $X$ is a projective scheme with an ample line bundle, we define the pretriangulated dg category
\begin{equation*}
    \mc{A} = \Perf(X) \simeq \Ch^b(\Vect(X))
\end{equation*}
with quasi-isomorphisms as its weak equivalences.
Note that the homotopy category $H^0\mc{A}$ is the usual bounded derived category $D^b(X)$ of $X$. 
For $i \in \N$, we let $\mc{C}_i \subset \Rep(G_1)$ be the full subcategory of the category of finite dimensional polynomial $G_1$-representations consisting of those representations whose irreducible components have highest weight $\alpha \in P_{d,e}$ such that $|\alpha| = i$. 
Let $(P_{d,e})_i \subset P_{d,e}$ be the subset of partitions of degree $i$. 
The bounded derived dg category $\Perf(\mc{C}_i)$ is generated as a pretriangulated dg category by the irreducible representations with highest weight $\alpha \in P_{d,e}$ with $|\alpha| = i$. 
By pulling back $G_1$-representations along the projection $P \ra G_1$, we can define, for each $i \in \N$, a fully faithful dg functor 
\begin{equation} \label{equation:glrepresentationtobundleongrassmanniann}
    \Phi_i: \Perf(\mc{C}_i) \ra \mc{A}    
\end{equation}
which sends $M$ to $\mc{L}_X(M)$, see \cite[p. 6,11]{buchweitz15} and note that this can really be constructed as a dg functor.
By \cite[theorem 5.6]{buchweitz15}, there is a semi-orthogonal decomposition
\begin{equation} \label{equation:semiorthogonaldecompositiongrassmannian}
    \mc{A} = \langle \mc{A}_0, \dots, \mc{A}_{de} \rangle,
\end{equation}
where $\mc{A}_i$ is the quasi-essential image of the dg functor $\Phi_i$, which is the full dg subcategory of $\mc{A}$ on objects quasi-isomorphic to objects in the image of $\Phi_i$. 
For each dominant weight $\lambda \in \Z^d$ of $G_1$, fix a representative $M_{\lambda}$ for the isomorphism class of irreducible representations of weight $\lambda$. 
Then $\Perf(\mc{C}_i)$ is generated by the irreducible representations $M_{\alpha}$ with highest weight $\alpha \in (P_{d,e})_i$.
For $\alpha, \beta \in (P_{d,e})_i$ such that $\alpha \neq \beta$, Schur's lemma ensures that $\Hom(M_{\alpha}, M_{\beta}) = 0$ in the category of $G_1$-representations.
Since the base field $k$ has characteristic zero, every $G_1$-representation splits into irreducible representations and it follows that 
\begin{equation*}
    \Ext^i(M_{\alpha},M_{\beta}) = \Hom(M_{\alpha}, M_{\beta}[i]) = 0    
\end{equation*}
in the triangulated category $H^0\Perf(\mc{C}_i)$. 
Hence, there is a semi-orthogonal decomposition
\begin{equation} \label{equation:semiorthogonaldecompositiontruncatedrepresentations}
    \Perf(\mc{C}_i) = \langle \langle M_{\alpha} \rangle \mid \alpha \in (P_{d,e})_i \rangle,
\end{equation}
where the set of partitions $P_{d,e}$ can be equipped with any linear order to obtain an semi-orthogonal decomposition of countably infinite length.
The semi-orthogonal decompositions (\ref{equation:semiorthogonaldecompositiongrassmannian}) and (\ref{equation:semiorthogonaldecompositiontruncatedrepresentations}) enable a computation of the $\K$-theory of $X$: by additivity for $\K$-theory
\begin{equation*}
    \K(\mc{A}) \simeq \bigoplus_{i=0}^{de} \K(\mc{A}_i) \simeq \bigoplus_{\alpha \in P_{d,e}} \K(k),
\end{equation*}
where the final equivalence follows from the fact that $\langle M_{\alpha} \rangle \simeq \Perf(k)$ for each $\alpha \in P_{d,e}$. 
Note that $|P_{d,e}| = \binom{n}{d}$.

\subsection{Duality on Young diagrams}
    \label{subsection:dualityonyoungdiagrams}

We would like to use the semi-orthogonal decomposition from the previous section for the computation of Witt groups and Grothendieck-Witt groups, but the standard duality on $\mc{A}$ does not permute the factors of the decomposition, and we cannot use the relevant additivity theorems.
We will now present a way to overcome this problem when either $d$ or $e$ is even, that is, when the Grassmannian is even-dimensional.
The inspiration for this solution came from a combination of the solution for projective spaces as presented in \cite{rohrbach22}, and the semi-orthogonal decomposition of proposition \ref{proposition:semiorthogonaldecompositionequivariantperfectcomplexes}, which \emph{is} permuted by the duality by corollary \ref{corollary:actionofdualityonorthogonaldecompositionofrepresentations}.

The case when both $d$ and $e$ are odd remains open, though the author expects the existence of an approach that unifies all the cases, in a similar fashion to \cite[theorem 6.1]{balmer12grassmann}, the results of which we will compare with that of our approach in due course.

First, we must understand the duality on $\mc{A}$.
Since, by \cite[lemma 5.1]{buchweitz15}, $\mc{A}$ is generated by exterior powers $\Lambda^{\alpha^{T}} \U$ with $\alpha \in P_{d,e}$ where $\alpha^{T} \in P_{e,d}$ is the transpose of $\alpha \in P_{d,e}$, it suffices to understand the duals of generators.

Recall from example \ref{example:representationtheoryofgenerallineargroup} that the longest element $w_0$ of the Weyl group of $\GL_e$ inverts the ordering of a dominant weight $\lambda \in \Z^e$, when seen as an ordered $e$-tuple. 
Also recall that $\U$ is the tautological bundle associated with the standard $d$-dimensional representation of $G_1$ and $\Delta = \det \U$ is the line bundle associated with the determinant representation with highest weight $(1,\dots,1)$.

\begin{lemma} \label{lemma:dualofexteriorpowerofstandardrepresentation}
Let $\alpha \in P_{d,e}$.
Then
\begin{equation*}
    (\Lambda^{\alpha^{T}}\U)^{\vee} \cong \Lambda^{\underline{d}-w_0\alpha^{T}}\U \otimes \Delta^{-e},
\end{equation*}
where $\Delta^{-e} = (\Delta^{\vee})^e$. 
\end{lemma}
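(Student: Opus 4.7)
The plan is to expand both sides as ordered tensor products of exterior powers of $\U$ and reduce the statement to the classical duality isomorphism for the exterior algebra of a bundle of known rank.

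First, I would unpack $\Lambda^{\alpha^T}\U = \bigotimes_{i=1}^{e}\Lambda^{\alpha^T_i}\U$ directly from the definition. The central ingredient is then the standard perfect pairing $\Lambda^{j}\U\otimes\Lambda^{d-j}\U\to\Lambda^{d}\U=\Delta$ induced by the wedge product on the rank $d$ bundle $\U$, which yields a canonical isomorphism $(\Lambda^{j}\U)^{\vee}\cong\Lambda^{d-j}\U\otimes\Delta^{-1}$ for every $0\leq j\leq d$. Applying this factor by factor and collecting the $e$ resulting copies of $\Delta^{-1}$ gives
\begin{equation*}
    (\Lambda^{\alpha^T}\U)^{\vee} \;\cong\; \Bigl(\bigotimes_{i=1}^{e}\Lambda^{d-\alpha^T_i}\U\Bigr) \otimes \Delta^{-e}.
\end{equation*}

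It then remains to identify the tensor product on the right with $\Lambda^{\underline{d}-w_0\alpha^T}\U$. Using the description of $w_0$ for $\GL_e$ from Example \ref{example:representationtheoryofgenerallineargroup} as the order-reversing permutation, I would compute $\underline{d}-w_0\alpha^T=(d-\alpha^T_e,d-\alpha^T_{e-1},\ldots,d-\alpha^T_1)$, which is a bona fide partition in $P_{e,d}$, and then invoke the symmetry of the tensor product to reorder the $e$ factors so that both sides visibly coincide.

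The only point requiring any care is conceptual rather than technical: by convention partitions are non-increasing tuples, so the naive subtraction $(d-\alpha^T_1,\ldots,d-\alpha^T_e)$ comes out non-decreasing and must be reversed. This reversal is exactly the role played by $-w_0$ in the statement, and once recognised no step in the proof is more than a routine manipulation of tensor products of line bundles and exterior powers.
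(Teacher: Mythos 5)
Your proposal is correct and follows essentially the same route as the paper: reduce to the single-factor isomorphism $(\Lambda^{j}\U)^{\vee}\cong\Lambda^{d-j}\U\otimes\Delta^{-1}$ via the wedge pairing, use compatibility of duality with tensor products, and absorb the order reversal into $w_0$ (which the paper handles by noting invariance of $\Lambda^{\alpha^T}\U$ under permutations of the entries, up to switch isomorphisms). Your explicit identification of $\underline{d}-w_0\alpha^T$ with the reversed tuple is just a spelled-out version of that same step.
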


\begin{proof}
Because the duality $\vee: \mc{A}^{\op} \ra \mc{A}$ commutes with tensor products and $\Lambda^{\alpha^{T}}\U$ is invariant (up to repeated switch isomorphisms) under permutations of the entries of $\alpha^{T}$, it suffices to show
\begin{equation*}
    (\Lambda^i \U)^{\vee} \cong \Lambda^{d-i}\U \otimes \Delta^{-1}
\end{equation*}
for $0 \leq i \leq d$, but this follows from the perfect pairing $\Lambda^i \U \otimes \Lambda^{d-i}\U \ra \Delta$: it gives an isomorphism
\begin{equation*}
    \Lambda^i \U \lra \Hom(\Lambda^{d-i}\U, \Delta),
\end{equation*}
and taking the dual on both sides gives the desired result.
\end{proof}

Inspired by the above lemma, we make a few definitions to help us organize our data.

\begin{figure}
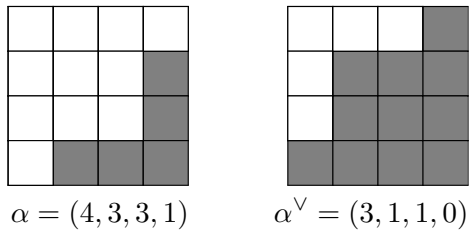

    \begin{equation*}
        \begin{aligned}
            \ydiagram
            [*(white)]{4,3,3,1}
            *[*(gray)]{4,4,4,4}
                & \qquad
                    & \ydiagram
                [*(white)]{3,1,1}
                *[*(gray)]{4,4,4,4} \\
            \alpha = (4,3,3,1)
                & \qquad
                    & \alpha^{\vee} = (3,1,1,0)
        \end{aligned}
    \end{equation*}   
    \caption{the dual of a Young diagram.}
    \label{figure:dualyoungdiagram}
\end{figure}

\begin{figure}
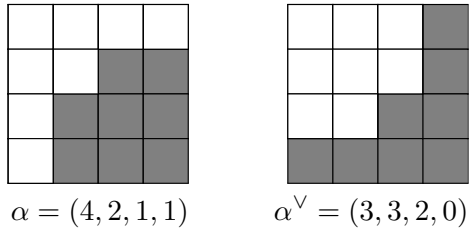

    \begin{equation*}
        \begin{aligned}
            \ydiagram
            [*(white)]{4,2,1,1}
            *[*(gray)]{4,4,4,4}
                & \qquad
                    & \ydiagram
                [*(white)]{3,3,2}
                *[*(gray)]{4,4,4,4} \\
            \alpha = (4,2,1,1)
                & \qquad
                    & \alpha^{\vee} = (3,3,2,0)
        \end{aligned}
    \end{equation*}   
    \caption{an example of a half partition and its dual.}
    \label{figure:halfpartition}
\end{figure}

\begin{definition} \label{def:dualofyoungdiagram}
Let $\alpha \in P_{d,e}$.
Let $w_0$ be the longest element of the Weyl group of $\GL_d$. 
The \emph{dual partition $\alpha^{\vee}$ of $\alpha$ in $P_{d,e}$} is the partition $\underline{e}-w_0\alpha$. 
Note that if $|\alpha| = i$, then $|\alpha^{\vee}| = de - i$. 
Therefore, we call $\alpha$ a \emph{half partition of $P_{d,e}$} if $\alpha$ has weight $de'$, so that $|\alpha| = |\alpha^{\vee}|$.
\end{definition}

Note that if $|\alpha| = i$, then $|\alpha^{\vee}| = de - i$, and that the degree of a weight is invariant under $w_0$.
Additionally, note that the dual of the transpose $\alpha^{T} \in P_{e,d}$ of some $\alpha \in P_{d,e}$ is $w_0(\underline{d}-\alpha^{T}) \in P_{e,d}$, where $w_0$ is the longest element of the Weyl group of $G_2$.
This duality on partitions is best described pictorially with Young diagrams, see figure \ref{figure:dualyoungdiagram}. 
The dual of a Young diagram is obtained by rotating the diagram $180$ degrees and swapping the filled part with the empty part of the $(d \times e)$-frame.
For an example of a half partition in a $4 \times 4$-frame, see figure \ref{figure:halfpartition}.

\subsection{The Grothendieck-Witt spectrum of an even Grassmannian}
    \label{subsection:thecomputationofthegrothendieckwittspectrumofagrassmannian}

We call the Grassmannian $X = \Gr(d, d+e)$ \emph{even} if its dimension $de$ is even.
From here on, we assume that $e$ is even (so the Grassmannian is even) and let $e' = e/2$.
We return to the semi-orthogonal decomposition of $\mc{A}$. 
The functor $- \otimes \Delta: \mc{A} \ra \mc{A}$ is an equivalence of pretriangulated dg categories.
Then we obtain the twisted semi-orthogonal decomposition
\begin{equation} \label{equation:shiftedsemiorthogonaldecompositionofgrassmannian}
    \mc{A} = \langle  \mc{A}_0 \otimes \Delta^{-e'}, \dots, \mc{A}_{de} \otimes \Delta^{-e'} \rangle
\end{equation}
by Corollary \ref{corollary:twistingsemiorthogonaldecompositionbyequivalence}.
The reason for this twist is explained by the following results.

\begin{lemma} \label{lemma:shifteddecompositionofgrassmannianstableunderduality}
The standard duality $\vee: \mc{A}^{\op} \ra \mc{A}$ acts on the semi-orthogonal decomposition (\ref{equation:shiftedsemiorthogonaldecompositionofgrassmannian}) as
\begin{equation*}
    (\mc{A}_i \otimes \Delta^{-e'})^{\vee} = \mc{A}_{de-i} \otimes \Delta^{-e'}
\end{equation*}
for each $i \in \{0,\dots,de\}$. 
\end{lemma}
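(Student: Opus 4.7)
The plan is to reduce the statement about subcategories to a statement about the generators $\Lambda^{\alpha^{T}}\U$ with $\alpha \in (P_{d,e})_i$, apply Lemma \ref{lemma:dualofexteriorpowerofstandardrepresentation} to dualize them, and then re-interpret the result in terms of the dual partition $\alpha^\vee$ via a short combinatorial identity on Young diagrams.

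First I would observe that $- \otimes \Delta^{e'}$ is an equivalence commuting with the duality in the expected way, so that
\begin{equation*}
    (\mc{A}_i \otimes \Delta^{-e'})^{\vee} = \mc{A}_i^{\vee} \otimes \Delta^{e'}.
\end{equation*}
It therefore suffices to establish $\mc{A}_i^{\vee} = \mc{A}_{de-i} \otimes \Delta^{-e}$, after which the conclusion follows from $-e + e' = -e'$. Since duality is an exact equivalence of pretriangulated dg categories, it sends $\mc{A}_i$ to the pretriangulated subcategory generated by the duals of its generators, so it is enough to compute the dual of each $\Lambda^{\alpha^T}\U$ with $\alpha \in (P_{d,e})_i$.

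Applying Lemma \ref{lemma:dualofexteriorpowerofstandardrepresentation} yields
\begin{equation*}
    (\Lambda^{\alpha^{T}}\U)^{\vee} \cong \Lambda^{\underline{d} - w_0\alpha^{T}}\U \otimes \Delta^{-e},
\end{equation*}
where $w_0$ is the longest element of the Weyl group of $\GL_e$. The exponent $\underline{d} - w_0\alpha^{T}$ is exactly $(\alpha^T)^\vee$ in the sense of Definition \ref{def:dualofyoungdiagram}, viewed inside $P_{e,d}$. The key combinatorial step, which I expect to be the main (if mild) obstacle, is the identity
\begin{equation*}
    (\alpha^{T})^{\vee} = (\alpha^{\vee})^{T},
\end{equation*}
i.e.\ transposing a Young diagram commutes with dualizing inside the bounding box. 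This is visibly true pictorially, since the duality is a rotation by $180^{\circ}$ composed with taking the complement in the $(d \times e)$-frame, both of which commute with reflection across the diagonal; one can also verify it directly from the formulas $\alpha^\vee = \underline{e} - w_0 \alpha$ and $(\alpha^T)_j = |\{i : \alpha_i \geq j\}|$.

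Granting the identity, $\Lambda^{(\alpha^T)^\vee}\U = \Lambda^{(\alpha^\vee)^T}\U$ is the generator of $\mc{A}_{de-i}$ associated to $\alpha^\vee \in (P_{d,e})_{de-i}$ (recalling $|\alpha^\vee| = de - |\alpha| = de - i$). As $\alpha$ ranges over $(P_{d,e})_i$, $\alpha^\vee$ ranges bijectively over $(P_{d,e})_{de-i}$, so the duals of the generators of $\mc{A}_i$ generate $\mc{A}_{de-i} \otimes \Delta^{-e}$. This gives $\mc{A}_i^\vee = \mc{A}_{de-i} \otimes \Delta^{-e}$, and twisting by $\Delta^{e'}$ completes the proof.
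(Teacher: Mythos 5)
Your proof is correct and follows essentially the same route as the paper: reduce to the generators $\Lambda^{\alpha^T}\U$, dualize them via Lemma \ref{lemma:dualofexteriorpowerofstandardrepresentation}, and reinterpret the exponent through Definition \ref{def:dualofyoungdiagram}. The only cosmetic difference is that you verify the identity $(\alpha^T)^\vee = (\alpha^\vee)^T$ explicitly, whereas the paper only records that $|(\alpha^T)^\vee| = de - i$, which already places the dualized generator in $\mc{A}_{de-i}\otimes\Delta^{-e'}$.
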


\begin{proof}
Let $\alpha \in P_{d,e}$ be a partition of degree $i$.
Then $\Lambda^{\alpha^{T}}\U \otimes \Delta^{-e'}$ is a generator of $\mc{A}_i \otimes \Delta^{-e'}$.
Combining lemma \ref{lemma:dualofexteriorpowerofstandardrepresentation} with definition \ref{def:dualofyoungdiagram}, we obtain
\begin{equation*}
    (\Lambda^{\alpha^{T}}\U \otimes \Delta^{-e'})^{\vee} = \Lambda^{(\alpha^{T})^{\vee}}\U \otimes \Delta^{-e'},
\end{equation*}
which is a generator of $\mc{A}_{de-i} \otimes \Delta^{-e'}$ since $|(\alpha^{T})^{\vee}| = de-i$. 
\end{proof}

From now on, let $\mc{B}_i = \mc{A}_i \otimes \Delta^{-e'}$. 
As an immediate corollary, we obtain a partial calculation of the Grothendieck-Witt spectrum $\GW(X)$.
This calculation holds in arbitrary characteristic due to the results of \cite{buchweitz15}. 

\begin{corollary} \label{corollary:partialcomputationgrothendieckwittgroupofgrassmannian}
There is a natural equivalence
\begin{equation*}
    \GW^{[n]}(X) \simeq \GW^{[n]}(\mc{B}_{de'}) \oplus \bigoplus_{i=0}^{de' - 1} \K(\mc{B}_{i}).
\end{equation*}
In particular,
\begin{equation*}
    \W^{[n]}(X) \simeq \W^{[n]}(\mc{B}_{de'}).
\end{equation*}
\end{corollary}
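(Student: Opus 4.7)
The plan is to apply additivity for the Grothendieck–Witt spectrum to the semi-orthogonal decomposition (\ref{equation:shiftedsemiorthogonaldecompositionofgrassmannian}), in the spirit of the proof of Proposition \ref{proposition:equivariantgwtheoryreductivegroup}, now using the pairing supplied by Lemma \ref{lemma:shifteddecompositionofgrassmannianstableunderduality}. That lemma shows that duality interchanges $\mc{B}_i$ with $\mc{B}_{de-i}$; since $de = 2de'$, only the index $i = de'$ is fixed, and the remaining $2de'$ components form $de'$ duality-swapped pairs $(\mc{B}_i, \mc{B}_{de-i})$ with $0 \leq i < de'$.

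The argument proceeds by iteratively peeling off the outermost such pair. At each step I consider the two-term duality-preserving decomposition of the relevant subcategory into $\langle \mc{B}_i, \mc{B}_{de-i}\rangle$ and its orthogonal complement; both are stable under the ambient duality (the first by Lemma \ref{lemma:shifteddecompositionofgrassmannianstableunderduality}, the second as the duality-stable complement of a duality-stable admissible subcategory). Applying additivity \cite[Proposition~6.8]{schlichting17} splits off a summand, and because the ambient duality identifies $\mc{B}_i$ with $\mc{B}_{de-i}^{\op}$ — so the outer pair is a hyperbolic dg category — this summand is equivalent to $\K(\mc{B}_i)$. After $de'$ iterations only the duality-stable central piece $\mc{B}_{de'}$ remains, yielding the displayed equivalence. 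The Witt-spectrum statement then follows immediately, since each hyperbolic summand $\K(\mc{B}_i)$ vanishes after passage to $\W^{[n]}$.

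The main obstacle, should any step require care, is the identification of the Grothendieck–Witt spectrum of a duality-swapped pair $\langle \mc{B}_i, \mc{B}_{de-i}\rangle$ with the $\K$-theory of $\mc{B}_i$ — the standard fact that a hyperbolic dg category has $\GW$-spectrum equal to the $\K$-theory of a single factor. One must also verify that the duality on the middle piece $\mc{B}_{de'}$ inherited from $\mc{A}$ agrees with the expected $[n]$-shifted duality on $\mc{B}_{de'}$; this is precisely what the twist by $\Delta^{-e'}$ in (\ref{equation:shiftedsemiorthogonaldecompositionofgrassmannian}) was arranged to guarantee, via the computation in Lemma \ref{lemma:dualofexteriorpowerofstandardrepresentation}.
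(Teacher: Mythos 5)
Your proposal is correct and is essentially the paper's own argument: the paper likewise deduces the corollary from the additivity theorem for Grothendieck--Witt theory applied to the twisted semi-orthogonal decomposition (\ref{equation:shiftedsemiorthogonaldecompositionofgrassmannian}), using that $(\mc{B}_i)^{\vee} = \mc{B}_{de-i}$ so that only $\mc{B}_{de'}$ is duality-fixed while the other components pair off and contribute hyperbolic ($\K$-theory) summands, which then die in $\W^{[n]}$. Your iterative peeling of dual pairs is just the explicit way of applying \cite[Proposition~6.8]{schlichting17} to a decomposition with more than three components, so no genuinely different route is involved.
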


\begin{proof}
This is an application of the additivity theorem for Grothendieck-Witt theory since $(\mc{B}_i)^{\vee} = \mc{B}_{de-i}$ for all $1 \leq i \leq de'$; in particular, $(\mc{B}_{de'})^{\vee} = \mc{B}_{de'}$.
\end{proof}

As $\K(\mc{B}_i)$ splits into copies of $\K(k)$, it remains to compute $\GW^{[n]}(\mc{B}_{de'})$.
For $0 \leq i \leq de$, define $\mc{D}_i = \Perf(\mc{C}_{i}) \otimes (\det V)^{-e'}$ as a pretriangulated dg subcategory of $\Perf(\Rep(G_1))$. 
By \cite[theorem 5.8]{buchweitz15} and \cite[section 3.1.4]{cline83}, the dg functor
\begin{equation*}
    \Phi': \mc{D}_{de'} \lra \mc{B}_{de'}
\end{equation*}
given by $M \otimes (\det V)^{-e'} \mapsto \mc{L}_X(M) \otimes \Delta^{-e'}$ is a duality-preserving equivalence.
The twisted version of the semi-orthogonal decomposition (\ref{equation:semiorthogonaldecompositiontruncatedrepresentations}) is
\begin{equation} \label{equation:semiorthogonaldecompositionmiddletwistedtruncatedrepresentations}
    \mc{D}_{de'} = \langle \langle M_{\alpha - \underline{e'}} \rangle \mid \alpha \in (P_{d,e})_{de'} \rangle,
\end{equation}
using the isomorphism $M_{\alpha} \otimes \det V^{-e'} \cong M_{\alpha - \underline{e'}}$ for $\alpha \in P_{d,e}$.
Note that this semi-orthogonal decomposition does not exist in prime characteristic, because the irreducible representations $M_{\alpha}$ do not form an exceptional collection, see example \ref{example:nontrivialextensiongenerallineargroupofdegreetwo}. 

\begin{lemma} \label{lemma:dualityofrepresentationsbyshiftedpartitions}
For a partition $\alpha \in P_{d,e}$, 
\begin{equation*}
    M_{\alpha - \underline{e'}}^{*} \cong M_{\alpha^{\vee} - \underline{e'}}.
\end{equation*}
\end{lemma}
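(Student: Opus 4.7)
The plan is to reduce the desired isomorphism directly to Lemma \ref{lemma:weightofdualrepresentation}, which says that the dual of an irreducible $G_1$-representation with highest weight $\lambda$ has highest weight $-w_0\lambda$, where $w_0$ is the longest element of the Weyl group of $G_1 = \GL_d$. Applying this with $\lambda = \alpha - \underline{e'}$ reduces the problem to checking the combinatorial identity
\begin{equation*}
    -w_0(\alpha - \underline{e'}) = \alpha^{\vee} - \underline{e'}
\end{equation*}
in the weight lattice $\Z^d$, after which the lemma follows from the uniqueness of an irreducible representation with a given highest weight.

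First I would use Example \ref{example:representationtheoryofgenerallineargroup} to recall that $w_0$ acts on $\Z^d$ as the order-reversing permutation of coordinates. In particular, $w_0$ fixes every constant tuple, so $w_0\underline{e'} = \underline{e'}$, giving
\begin{equation*}
    -w_0(\alpha - \underline{e'}) = -w_0\alpha + \underline{e'}.
\end{equation*}
Then, unfolding Definition \ref{def:dualofyoungdiagram}, $\alpha^{\vee} - \underline{e'} = \underline{e} - w_0\alpha - \underline{e'} = \underline{e'} - w_0\alpha$, which is the same expression. This closes the argument.

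There is no real obstacle; the computation is essentially a verification that the duality on representations and the duality on partitions were set up compatibly. Indeed, the entire reason for the twist by $\Delta^{-e'}$ (equivalently $(\det V)^{-e'}$) introduced in the shifted decomposition (\ref{equation:shiftedsemiorthogonaldecompositionofgrassmannian}) is precisely to make this identity hold symmetrically about $\underline{e'}$, so that the duality stabilizes the middle piece $\mc{B}_{de'}$ and, at the level of the decomposition (\ref{equation:semiorthogonaldecompositionmiddletwistedtruncatedrepresentations}), permutes the rank-one generators via the partition duality $\alpha \mapsto \alpha^{\vee}$.
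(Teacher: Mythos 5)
Your proof is correct and follows exactly the paper's argument: apply Lemma \ref{lemma:weightofdualrepresentation} to $\lambda = \alpha - \underline{e'}$, use that $w_0$ fixes constant tuples to get $-w_0(\alpha - \underline{e'}) = -w_0\alpha + \underline{e'}$, and match this with $\alpha^{\vee} - \underline{e'} = \underline{e} - w_0\alpha - \underline{e'}$ from Definition \ref{def:dualofyoungdiagram}. No gaps; the concluding remark about the role of the twist by $(\det V)^{-e'}$ is accurate context but not needed for the proof.
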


\begin{proof}
By lemma \ref{lemma:weightofdualrepresentation}, 
\begin{equation*}
    M_{\alpha - \underline{e'}}^{*} \cong M_{-w_0(\alpha - \underline{e'})}.
\end{equation*}
Since $w_0(\underline{m}) = \underline{m}$ for all $m \in \Z$, $-w_0(\alpha - \underline{e'}) = -w_0\alpha + \underline{e'}$.
On the other hand, definition \ref{def:dualofyoungdiagram} yields
\begin{equation*}
    \alpha^{\vee} - \underline{e'} = \underline{e} - w_0\alpha - \underline{e'} = -w_0\alpha + \underline{e'},
\end{equation*}
which concludes the proof.
\end{proof}

In particular, if $\alpha$ is a half partition, then $M_{\alpha - \underline{e'}}$ and its dual have highest weights of the same degree.
Hence, to study how $\mc{D}_{de'}$ behaves under duality, we need to understand the combinatorics of half partitions.
We will distinguish between two kinds of half partitions.

\begin{definition} \label{def:symmetrichalfpartition}
A \emph{symmetric half partition $\alpha \in P_{d,e}$} is a half partition (see definition \ref{def:dualofyoungdiagram}) such that $\alpha^{\vee} = \alpha$.
A half partition is called \emph{asymmetric} if it is not symmetric.
The Young diagram of a symmetric partition is also called \emph{symmetric}.
\end{definition}

\begin{lemma} \label{lemma:numberofsymmetrichalfpartitions}
Let $d' = \lfloor \tfrac{d}{2} \rfloor$. 
There are $\binom{d' + e'}{e'}$ symmetric half partitions in $P_{d,e}$.
\end{lemma}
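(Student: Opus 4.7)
The plan is to parametrize symmetric half partitions by their top half and then reduce the count to the well-known count of partitions fitting in a rectangle.

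First, I would unwind the symmetry condition entrywise. A half partition $\alpha \in P_{d,e}$ is symmetric precisely when $\alpha = \underline{e} - w_0\alpha$. Since $w_0$ reverses the entries of a $d$-tuple, this translates to the system
\begin{equation*}
    \alpha_i + \alpha_{d+1-i} = e \qquad \text{for all } i = 1,\dots,d.
\end{equation*}
In particular, if $d = 2d' + 1$ is odd, the middle row index $i = d'+1$ is fixed by $w_0$, forcing $\alpha_{d'+1} = e'$. In either parity of $d$, the equations above show that the top $d'$ entries $(\alpha_1, \dots, \alpha_{d'})$ determine the remaining entries uniquely; conversely, any choice of top half that is consistent with the partition condition $\alpha_i \geq \alpha_{i+1}$ extends uniquely to a symmetric half partition (and automatically satisfies the degree constraint $|\alpha| = de'$, since the pairs $(\alpha_i, \alpha_{d+1-i})$ each contribute $e$, and the middle entry in the odd case contributes $e'$).

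Next I would identify the range of admissible top halves. Because $\alpha \in P_{d,e}$, we have $\alpha_1 \leq e$; monotonicity gives $\alpha_1 \geq \dots \geq \alpha_{d'}$; and the transition to the bottom half requires $\alpha_{d'} \geq \alpha_{d'+1}$, where $\alpha_{d'+1}$ equals either $e - \alpha_{d'}$ (when $d$ is even) or $e' = e/2$ (when $d$ is odd). In both cases this boils down to $\alpha_{d'} \geq e'$. Thus symmetric half partitions in $P_{d,e}$ are in bijection with tuples $(\alpha_1, \dots, \alpha_{d'})$ satisfying
\begin{equation*}
    e \geq \alpha_1 \geq \alpha_2 \geq \dots \geq \alpha_{d'} \geq e'.
\end{equation*}

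Finally, I would make the change of variables $\beta_i = \alpha_i - e'$, turning this into the set of weakly decreasing tuples $e' \geq \beta_1 \geq \dots \geq \beta_{d'} \geq 0$, i.e., partitions whose Young diagram fits inside a $d' \times e'$ rectangle. The count of such partitions is the standard binomial coefficient $\binom{d' + e'}{d'} = \binom{d' + e'}{e'}$, yielding the claimed formula.

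There is no real obstacle here; the only subtlety worth double-checking is the parity split of $d$ and verifying that the middle row condition in the odd case is automatically compatible with the bound $\alpha_{d'} \geq e'$ (which it is, since $\alpha_{d'+1} = e'$). The degree check $|\alpha| = de'$ is likewise automatic from the pairing, so no further arithmetic is needed.
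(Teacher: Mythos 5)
Your argument is correct, but it runs along a different track than the paper's. The paper encodes a Young diagram in the $(d \times e)$-frame as a binary lattice-path word of length $d+e$, observes that the duality $\alpha \mapsto \alpha^{\vee}$ is reversal of that word, so symmetric diagrams are palindromes, and counts palindromes by their first $d'+e'$ letters. You instead unwind $\alpha = \underline{e} - w_0\alpha$ entrywise into $\alpha_i + \alpha_{d+1-i} = e$, parametrize a symmetric $\alpha$ by its top $d'$ rows subject to $e \geq \alpha_1 \geq \dots \geq \alpha_{d'} \geq e'$ (the junction condition), and shift by $e'$ to identify these with partitions in a $d' \times e'$ box, counted by $\binom{d'+e'}{e'}$. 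The two bijections are of course equivalent, and both ultimately rest on the standard rectangle count (which the paper itself invokes for $|P_{d,e}| = \binom{n}{d}$, so citing it is consistent with the paper's level of detail). Your version has two small advantages: it makes explicit that symmetry forces $|\alpha| = de'$ (so ``symmetric'' automatically implies ``half''), and in the odd-$d$ case it pins down the middle row as $\alpha_{d'+1} = e'$ directly, avoiding the parity bookkeeping about the middle letter of the palindrome, which is the one delicate point in the paper's argument. The only step you leave implicit is that the bottom half $\alpha_{d+1-i} = e - \alpha_i$ is automatically weakly decreasing, but that is an immediate one-line consequence of the monotonicity of the top half, so there is no real gap.
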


\begin{proof}
Note that Young diagrams in the $(d \times e)$-frame correspond to binary sequences of length $d+e$, containing $d$ zeroes and $e$ ones.
Given such a binary sequence, if we think of a Young diagram as a path starting in the lower left corner of the frame, each zero in the sequence means going up one row, and each one means going one column to the right.
For example, the Young diagram $(4,3,3,1)$ in the $(4 \times 4)$-frame corresponds to the binary sequence $10110010$, read from left to right.
A Young diagram is symmetric if and only if the corresponding binary sequence is a palindrome.
If $d$ is odd, it follows that the middle bit in the binary sequence of a symmetric Young diagram must be a one, since $e$ is even by assumption.
Thus a symmetric Young diagram is completely determined by the first $d' + e'$ bits of the corresponding binary sequence, $e'$ of which must be zero.
It follows that the number of different symmetric Young diagrams in the $(d \times e)$-frame is
\begin{equation*}
    \binom{d'+e'}{e'},
\end{equation*}
as was to be shown.
\end{proof}

The total number of asymmetric half partitions in $P_{d,e}$ is harder to determine, but can be expressed using a recursive formula.
Ultimately, we are more interested in the symmetric half partitions because the asymmetric half partitions only contribute to the hyperbolic part of the Grothendieck-Witt spectrum.
For $i \in \N$, $|(P_{d,e})_i|$ is the total number of partitions of degree $i$ in $P_{d,e}$.

\begin{lemma} \label{lemma:numberofhalfpartitions}
The total number of half partitions $|(P_{d,e})_{de'}|$ is even, and given by the recursive formula
\begin{equation*}
    |(P_{d,e})_{de'}| = \sum_{j=1}^{e} |(P_{d-1,j})_{de' - j}|.
\end{equation*}
\end{lemma}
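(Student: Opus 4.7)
The plan is to prove the recursive formula by a bijective decomposition according to the first row of a half partition, and to derive the parity statement from the duality involution on $(P_{d,e})_{de'}$.

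For the recursive formula, I would set up the bijection
\[
(P_{d,e})_{de'} \;\longleftrightarrow\; \bigsqcup_{j=0}^{e} (P_{d-1,j})_{de'-j}
\]
sending $\alpha = (\alpha_1, \alpha_2, \ldots, \alpha_d)$ to the pair $(j, \tilde{\alpha})$ with $j = \alpha_1 \in \{0, 1, \ldots, e\}$ and $\tilde{\alpha} = (\alpha_2, \ldots, \alpha_d)$. Since $\alpha$ is non-increasing with entries in $\{0, \ldots, e\}$, the tail $\tilde{\alpha}$ is a non-increasing $(d-1)$-tuple whose entries are bounded by $j$; thus $\tilde{\alpha} \in P_{d-1,j}$ and $|\tilde{\alpha}| = |\alpha| - j = de' - j$. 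The inverse simply prepends $j$. Summing cardinalities then yields $|(P_{d,e})_{de'}| = \sum_{j=0}^{e}|(P_{d-1,j})_{de'-j}|$. Under the standing assumption $de > 0$, the $j=0$ term vanishes: $P_{d-1,0}$ consists solely of the empty partition, which has degree $0 \neq de'$. This is how I would obtain the stated sum starting from $j=1$.

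For the parity claim, I would invoke the involution $\alpha \mapsto \alpha^{\vee}$ from Definition \ref{def:dualofyoungdiagram}. Because $|\alpha^{\vee}| = de - |\alpha| = de - de' = de'$, this restricts to an involution on $(P_{d,e})_{de'}$. Its orbits consist either of fixed points, which are precisely the symmetric half partitions counted in Lemma \ref{lemma:numberofsymmetrichalfpartitions}, or of free pairs of asymmetric half partitions. Hence the asymmetric contribution to $|(P_{d,e})_{de'}|$ is even, and the total has the same parity as the number of symmetric half partitions.

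The main obstacle is purely bookkeeping: verifying that the decomposition map respects both the frame and degree constraints (so that the tail really lies in $P_{d-1,\alpha_1}$ with the correct degree), and confirming that the $j = 0$ term vanishes in the nontrivial regime. The involution argument then falls out immediately once one checks that $\alpha \mapsto \alpha^{\vee}$ is well defined on the degree-$de'$ stratum and acts freely off its fixed set.
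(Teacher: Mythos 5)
Your proof of the recursive formula is correct and is essentially the paper's own argument: decompose $(P_{d,e})_{de'}$ according to the first entry $\alpha_1=j$, observe that the tail lies in $(P_{d-1,j})_{de'-j}$, and note that $j=0$ cannot occur because $de'\neq 0$ (the paper phrases this as $\alpha_1\neq 0$; your explicit treatment of the $j=0$ term is the same point).

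The parity part is where there is a genuine problem, though not one you could have fixed. Your involution argument proves exactly what you state: the asymmetric half partitions pair up under $\alpha\mapsto\alpha^{\vee}$, so $|(P_{d,e})_{de'}|$ has the same parity as the number of symmetric half partitions, which by Lemma \ref{lemma:numberofsymmetrichalfpartitions} is $\binom{d'+e'}{e'}$. But this is strictly weaker than the evenness asserted in the statement, and in fact the evenness claim is false in general: for $d=4$, $e=2$ (so $de'=4$) the half partitions are $(2,2,0,0)$, $(2,1,1,0)$ and $(1,1,1,1)$, three in total, and for $d=1$, $e=2$ the only half partition is $(1)$. So no argument can establish the claim as stated; your carefully hedged conclusion (``same parity as the symmetric count'') is the most one can prove. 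It is worth noting that the paper's own proof is silent on the evenness claim as well --- it only establishes the recursion --- so the defect lies in the statement of the lemma rather than in your approach. The parity fact actually used later, in Lemma \ref{lemma:grothendieckwittofmiddlecomponentofgrassmannian} and Theorem \ref{theorem:grothendieckwittspectrumofevengrassmannian}, is precisely the one your involution does deliver: the number $A_{d,e}$ of asymmetric half partitions is even, since the duality acts freely on them.
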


\begin{proof}
An $\alpha \in (P_{d,e})_{de'}$ is given by a $d$-tuple $(\alpha_1, \dots, \alpha_d)$ such that $0 \leq \alpha_i \leq e$ and $\alpha_i \geq \alpha_{i+1}$ for all $0 \leq i \leq d$.
Note that $\alpha_1 \neq 0$ since $de' \neq 0$ by assumption. 
If $\alpha_1 = j$ for some $j \in \N$, then $\alpha_i \leq j$ for all $i \geq 2$. 
Hence $(\alpha_2, \dots, \alpha_d) \in (P_{d-1,j})_{de' - j}$, and the recursive formula follows.
\end{proof}

Finally, here is a computation of the Grothendieck-Witt spectrum of $\mc{D}_{de'}$. 
Let $A_{d,e}$ be the number of asymmetric half partitions of $P_{d,e}$. 

\begin{lemma} \label{lemma:grothendieckwittofmiddlecomponentofgrassmannian}
There is an equivalence
\begin{equation*}
    \GW^{[n]}(\mc{D}_{de'}) \simeq \bigoplus_{i=1}^{\binom{d'+e'}{e'}}\GW^{[n]}(k) \oplus \bigoplus_{j=1}^{\tfrac{A_{d,e}}{2}} \K(k).
\end{equation*}
\end{lemma}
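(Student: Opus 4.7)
The plan is to apply additivity for the Grothendieck--Witt spectrum to the semi-orthogonal decomposition (\ref{equation:semiorthogonaldecompositionmiddletwistedtruncatedrepresentations}), following the template of proposition \ref{proposition:equivariantgwtheoryreductivegroup}. The duality on $\mc{D}_{de'}$ is inherited from the standard duality on $\Perf^{G_1}(k)$, and by lemma \ref{lemma:dualityofrepresentationsbyshiftedpartitions} it sends the generator $M_{\alpha - \underline{e'}}$ to $M_{\alpha^\vee - \underline{e'}}$. Hence the induced involution on the indexing set $(P_{d,e})_{de'}$ of the decomposition is precisely $\alpha \mapsto \alpha^\vee$.

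First I would identify the fixed points and orbits of this involution. The fixed points are the symmetric half partitions of definition \ref{def:symmetrichalfpartition}, of which there are $\binom{d'+e'}{e'}$ by lemma \ref{lemma:numberofsymmetrichalfpartitions}. The remaining $A_{d,e}$ asymmetric half partitions are permuted in two-cycles, which in particular forces $A_{d,e}$ to be even. Next, I would invoke additivity for the Grothendieck--Witt spectrum \cite[Proposition~6.8]{schlichting17}, exactly as in the proof of proposition \ref{proposition:equivariantgwtheoryreductivegroup}: each two-cycle $\{\alpha, \alpha^\vee\}$ of asymmetric half partitions is a duality-stable piece on which the duality swaps the two generators, contributing $\K(\langle M_{\alpha - \underline{e'}}\rangle) \simeq \K(k)$, for a total of $A_{d,e}/2$ copies of $\K(k)$. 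Each symmetric half partition $\alpha$ contributes $\GW^{[n+\epsilon_\alpha]}(\langle M_{\alpha - \underline{e'}}\rangle) \simeq \GW^{[n+\epsilon_\alpha]}(k)$, where $\epsilon_\alpha \in \{0, 2\}$ records the sign of the self-duality on the irreducible $G_1$-representation $M_{\alpha - \underline{e'}}$.

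The main thing to verify is that $\epsilon_\alpha = 0$ for every symmetric half partition, so that no copies of $\GW^{[n+2]}(k)$ appear in the sum. By proposition \ref{proposition:signofrepresentationbyparity}, it suffices to show that $\langle 2\rho^\vee, \alpha - \underline{e'} \rangle$ is even whenever $\alpha^\vee = \alpha$. For such $\alpha$, definition \ref{def:dualofyoungdiagram} gives $-w_0(\alpha - \underline{e'}) = \alpha - \underline{e'}$, so $\alpha - \underline{e'}$ is a self-dual weight of $\GL_d$. The final computation of example \ref{example:representationtheoryofgenerallineargroup} shows that every self-dual dominant weight of $\GL_d$ pairs to an even integer with $2\rho^\vee$, i.e.\ $C_0 = C_0^+$, so $\epsilon_\alpha = 0$ as required. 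Combining the $\binom{d'+e'}{e'}$ copies of $\GW^{[n]}(k)$ with the $A_{d,e}/2$ copies of $\K(k)$ yields the claimed decomposition.

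The main obstacle is bookkeeping: one must be careful that the duality carried along under the twist by $(\det V)^{-e'}$ is the same as the representation-theoretic duality used in proposition \ref{proposition:signofrepresentationbyparity}. This is essentially built into lemma \ref{lemma:dualityofrepresentationsbyshiftedpartitions} via the identity $-w_0(\alpha - \underline{e'}) = \alpha^\vee - \underline{e'}$, which is why the shift by $-e'$ in the weight lattice matches the shift by $(\det V)^{-e'}$ on the level of dg categories; everything else is a direct application of additivity.
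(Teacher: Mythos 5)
Your proposal is correct and follows essentially the same route as the paper: additivity for the Grothendieck--Witt spectrum applied to the semi-orthogonal decomposition of $\mc{D}_{de'}$ indexed by half partitions, with dual pairs of asymmetric partitions each contributing a copy of $\K(k)$ and the $\binom{d'+e'}{e'}$ symmetric ones contributing copies of $\GW^{[n]}(k)$. Your explicit verification that $\epsilon_\alpha = 0$ for every symmetric half partition (via proposition \ref{proposition:signofrepresentationbyparity} and the computation $C_0 = C_0^+$ in example \ref{example:representationtheoryofgenerallineargroup}) is a point the paper's proof leaves implicit, so this is a welcome extra precision rather than a divergence.
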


\begin{proof}
On the one hand, the duality $\vee: (P_{d,e})_{de'} \ra (P_{d,e})_{de'}$ sends asymmetric half partitions to asymmetric half partitions, leaving no asymmetric half partition fixed.
Hence, if $\alpha$ is an asymmetric half partition, $M_{\alpha - \underline{e'}}$ is different from its dual and contributes a copy of $\K(\langle M_{\alpha - \underline{e'}} \rangle)$ to the direct sum decomposition of $\GW^{[n]}(\mc{D}_{de'})$ by additivity.

On the other hand, if $\alpha$ is a symmetric half partition, $M_{\alpha - \underline{e'}}$ is self-dual and contributes a copy of $\GW^{[n]}(\langle M_{\alpha - \underline{e'}} \rangle)$ to the direct sum decomposition.
By proposition \ref{proposition:semiorthogonaldecompositionequivariantperfectcomplexes}, $\langle M_{\alpha - \underline{e'}} \rangle \simeq \Perf(k)$, which finishes the proof.
\end{proof}

Putting everything together, we obtain our main result for Grothendieck-Witt spectra of Grassmannians.

\begin{theorem} \label{theorem:grothendieckwittspectrumofevengrassmannian}
Let $p = \binom{d'+e'}{e'}$ and $q = \tfrac{1}{2}(\binom{d+e}{e} - p)$.
There is an equivalence
\begin{equation*}
    \GW^{[n]}(X) \simeq \bigoplus_{i=1}^{p}\GW^{[n]}(k) \oplus \bigoplus_{j=1}^{q} \K(k).
\end{equation*}
\end{theorem}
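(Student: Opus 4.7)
The plan is to assemble the theorem by combining the three main ingredients already developed, then reducing to a combinatorial count. The key identity I want to reach is
\begin{equation*}
    \GW^{[n]}(X) \simeq \GW^{[n]}(\mc{B}_{de'}) \oplus \bigoplus_{i=0}^{de'-1} \K(\mc{B}_i),
\end{equation*}
which is exactly Corollary \ref{corollary:partialcomputationgrothendieckwittgroupofgrassmannian}. So I first invoke that corollary, then replace the middle $\GW$-term using the duality-preserving equivalence $\Phi' : \mc{D}_{de'} \to \mc{B}_{de'}$ together with Lemma \ref{lemma:grothendieckwittofmiddlecomponentofgrassmannian}, which gives $p = \binom{d'+e'}{e'}$ copies of $\GW^{[n]}(k)$ and $A_{d,e}/2$ copies of $\K(k)$.

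Next I handle the off-middle factors. For each $i$ with $0 \le i \le de'-1$, the semi-orthogonal decomposition (\ref{equation:semiorthogonaldecompositiontruncatedrepresentations}) and the fact that $\langle M_\alpha \rangle \simeq \Perf(k)$ imply, via additivity for $\K$-theory, that
\begin{equation*}
    \K(\mc{B}_i) \simeq \bigoplus_{\alpha \in (P_{d,e})_i} \K(k),
\end{equation*}
so this piece contributes $\sum_{i=0}^{de'-1} |(P_{d,e})_i|$ copies of $\K(k)$. Combining everything, I have $p$ copies of $\GW^{[n]}(k)$ and
\begin{equation*}
    N \;=\; \sum_{i=0}^{de'-1} |(P_{d,e})_i| \;+\; \tfrac{1}{2}A_{d,e}
\end{equation*}
copies of $\K(k)$.

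It remains to verify $N = q$. Here I use the degree-reversing symmetry $\alpha \mapsto \alpha^{\vee}$ of Definition \ref{def:dualofyoungdiagram}, which is a bijection $(P_{d,e})_i \to (P_{d,e})_{de-i}$, so $|(P_{d,e})_i| = |(P_{d,e})_{de-i}|$. Since $|P_{d,e}| = \binom{d+e}{e}$ and $|(P_{d,e})_{de'}| = p + A_{d,e}$ by the definitions of symmetric and asymmetric half partitions, splitting the total count by degree yields
\begin{equation*}
    \binom{d+e}{e} \;=\; 2\sum_{i=0}^{de'-1} |(P_{d,e})_i| \;+\; p \;+\; A_{d,e}.
\end{equation*}
Rearranging gives $\sum_{i=0}^{de'-1} |(P_{d,e})_i| = \tfrac{1}{2}(\binom{d+e}{e} - p - A_{d,e})$, and hence $N = \tfrac{1}{2}(\binom{d+e}{e} - p) = q$.

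This is essentially all bookkeeping; no step looks technically hard since every heavy lifting has been done upstream (the twisted semi-orthogonal decomposition, its compatibility with duality, and the equivalence $\Phi'$). The only real subtlety I would want to double-check while writing is that the $\K(k)$-summand contributed by an asymmetric orbit $\{\alpha, \alpha^{\vee}\}$ really appears with multiplicity one (not two) on the $\GW$-side via the hyperbolic functor, so that the $A_{d,e}/2$ count in Lemma \ref{lemma:grothendieckwittofmiddlecomponentofgrassmannian} matches up cleanly with the un-paired factors $\K(\mc{B}_i)$ for $i < de'$ in the index arithmetic above; this is guaranteed by additivity for $\GW$-theory applied to the duality-stable semi-orthogonal decomposition of $\mc{B}_{de'}$, exactly as in the proof of Proposition \ref{proposition:equivariantgwtheoryreductivegroup}.
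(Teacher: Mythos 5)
Your proposal is correct and follows essentially the same route as the paper: Corollary \ref{corollary:partialcomputationgrothendieckwittgroupofgrassmannian} for the middle-versus-rest splitting, Lemma \ref{lemma:grothendieckwittofmiddlecomponentofgrassmannian} (via $\Phi'$) for the middle piece, and then the dual-pair count of partitions to identify the number of $\K(k)$-summands with $q$. Your explicit bookkeeping with $\sum_{i=0}^{de'-1}|(P_{d,e})_i| + \tfrac{1}{2}A_{d,e}$ is just a spelled-out version of the paper's terser statement that the $\binom{d+e}{e}-p$ non-self-dual generators occur in dual pairs, each contributing one copy of $\K(k)$.
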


\begin{proof}
By corollary \ref{corollary:partialcomputationgrothendieckwittgroupofgrassmannian}, copies of $\GW^{[n]}(k)$ are contributed solely by
\begin{equation*}
    \GW^{[n]}(\mc{D}_{de'}),
\end{equation*}
and lemma \ref{lemma:grothendieckwittofmiddlecomponentofgrassmannian} tells us that there exactly $p$ of such copies.

Furthermore, the exceptional collection of $\mc{A}$ consisting of bundles $L^{\alpha}\mc{U}$ has $\binom{d+e}{e}$ members, one for each partition in $P_{d,e}$. 
Only $p$ of these generators are self-dual, and the rest occur in dual pairs.
Each such dual pair contributes exactly one copy of $\K(k)$ by additivity for $\GW$ and the fact that $\langle M_{\alpha - \underline{e'}} \rangle \simeq \Perf(k)$. 
The result follows.
\end{proof}

\appendix 

\section{Semi-orthogonal decompositions}
    \label{section:semiorthogonaldecompositions}

In this appendix, we give a reminder on semi-orthogonal decompositions of pretriangulated dg categories.
Usually, the theory of semi-orthogonal decompositions is explained using triangulated categories, so this section also provides details on the translation between the two settings.
Recall that a pretriangulated dg category is a dg category whose homotopy category is triangulated.
Let $\mc{A}$ be a pretriangulated dg category.

\begin{definition} \label{def:leftorthogonaldgcategory}
Let $\mc{B} \subset \mc{A}$ be a (not necessarily full) dg subcategory of a dg category.
The \emph{left orthogonal $\lo{\mc{B}}$ (resp. the right orthogonal $\ro{\mc{B}}$) of $\mc{B}$} is the full dg subcategory consisting of the objects $A \in \mc{A}$ such that the chain complex $\mc{A}(A,B)$ (resp. $\mc{A}(B,A)$) is acyclic for all $B \in \mc{B}$.
\end{definition}
    
\begin{definition} \label{def:semiorthogonaldecompositiondgcategory}
Let $\mc{A}$ be a pretriangulated dg category.
A pair of full pretriangulated dg subcategories $\mc{A}_-, \mc{A}_+$ forms a \emph{semi-orthogonal decomposition $\langle \mc{A}_-, \mc{A}_+ \rangle $ of $\mc{A}$} if
\begin{enumerate} [label=(\roman*)]
    \item for all objects $A_- \in \mc{A}_-$ and $A_+ \in \mc{A}_+$, the mapping complex $\mc{A}(A_+,A_-)$ is acyclic (``there are no morphisms from right to left''); and
    \item for every object $A \in \mc{A}$, there exists a closed morphism $f: A_+ \ra A$ with $A_+ \in \mc{A}_+$ whose cone $A_- = \cone(f)$ is in $\mc{A}_-$.
\end{enumerate}
\end{definition}

The following lemma shows that a semi-orthogonal decomposition of a pretriangulated dg category is the same as a semi-orthogonal decomposition of the underlying triangulated category.

\begin{lemma} \label{lemma:semiorthogonaldecompositionadmitsadjoints}
Let $\mc{A}$ be a pretriangulated dg category with full pretriangulated dg subcategories $\mc{A}_-$ and $\mc{A}_+$.
The following are equivalent.
\begin{enumerate}[label=(\roman*)]
    \item The pair $\langle \mc{A}_-, \mc{A}_+ \rangle$ is a semi-orthogonal decomposition of $\mc{A}$.
    \item The inclusion $H^0\mc{A}_- \subset H^0\mc{A}$ admits a left adjoint and $\lo{\mc{A}}_- = \mc{A}_+$. 
    \item The inclusion $H^0\mc{A}_+ \subset H^0\mc{A}$ admits a right adjoint and $\ro{\mc{A}_+} = \mc{A}_-$. 
\end{enumerate}
\end{lemma}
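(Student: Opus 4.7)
The plan is to prove (i) $\Leftrightarrow$ (ii) carefully; then (i) $\Leftrightarrow$ (iii) follows from the dual statement applied to $\mc{A}^{\op}$, which exchanges left with right adjoints and swaps $\mc{A}_+$ with $\mc{A}_-$. The bridge between dg and triangulated phrasings is a standing observation: $\mc{A}(X,Y)$ is acyclic if and only if $\Hom_{H^0 \mc{A}}(X, Y[n]) = 0$ for all $n \in \Z$, and since full pretriangulated dg subcategories are closed under shifts, the orthogonality condition of Definition \ref{def:semiorthogonaldecompositiondgcategory}(i) is equivalent to $\Hom$-vanishing at the level of $H^0 \mc{A}$. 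Moreover, $\mc{A}$ being pretriangulated means every morphism in $H^0 \mc{A}$ lifts to a closed degree-zero morphism whose cone is constructible at the dg level, so any triangle-theoretic decomposition in $H^0 \mc{A}$ lifts to genuine dg data. This reduces the problem to the classical Bondal--Orlov statement for triangulated categories.

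For (i) $\Rightarrow$ (ii), I would use Definition \ref{def:semiorthogonaldecompositiondgcategory}(ii) to assign to each $A \in \mc{A}$ a triangle $A_+ \to A \to A_- \to A_+[1]$ with $A_\pm \in H^0 \mc{A}_\pm$, and verify uniqueness: given another such triangle $A'_+ \to A \to A'_-$, the vanishing $\Hom(A_+, A'_-) = 0$ lets one lift $\id_A$ to a morphism of triangles, and the five lemma yields an isomorphism. Uniqueness promotes $A \mapsto A_-$ to a functor $L: H^0 \mc{A} \to H^0 \mc{A}_-$, and the long exact sequence combined with $\Hom(A_+, X[\bullet]) = 0$ shows that the map $A \to LA$ is the unit of a left adjunction to the inclusion. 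The inclusion $\mc{A}_+ \subseteq \lo{\mc{A}}_-$ is immediate from the orthogonality axiom; conversely, if $A \in \lo{\mc{A}}_-$, then the middle map $g: A \to A_-$ in the decomposition triangle vanishes, so rotating yields a split triangle $A_-[-1] \to A_+ \to A \to A_-$ with $A_+ \cong A \oplus A_-[-1]$. Then $A_-[-1] \in \mc{A}_-$ is a retract of $A_+ \in \mc{A}_+$, but $\Hom(A_+, A_-[-1]) = 0$ by semi-orthogonality, so the retraction is zero, forcing $A_- = 0$ and hence $A \cong A_+ \in \mc{A}_+$.

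For (ii) $\Rightarrow$ (i), given the left adjoint $L$ and $\lo{\mc{A}}_- = \mc{A}_+$, I would lift the unit $A \to LA$ to a closed morphism at the dg level and define $A_+$ to be its dg fiber, yielding a triangle $A_+ \to A \to LA \to A_+[1]$. It suffices to show $A_+ \in \mc{A}_+$. By adjointness, the unit induces an isomorphism $\Hom(LA, X) \to \Hom(A, X)$ for every $X \in H^0 \mc{A}_-$; the long exact sequence of the triangle then yields $\Hom(A_+, X[n]) = 0$ for all $n$ and all $X \in \mc{A}_-$. Consequently $\mc{A}(A_+, X)$ is acyclic, so $A_+ \in \lo{\mc{A}}_- = \mc{A}_+$, and this same vanishing is Definition \ref{def:semiorthogonaldecompositiondgcategory}(i) for the constructed triangle.

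The main obstacle I anticipate is not any single deep argument but the careful bookkeeping between the dg and triangulated levels -- in particular, choosing representatives and dg cones consistently so that the triangle constructed in $H^0 \mc{A}$ genuinely arises from a closed morphism in $\mc{A}$, and verifying that all orthogonality conditions translate unambiguously at both levels. Once one is at ease with the facts that pretriangulated subcategories are closed under shifts and that $H^0 \mc{A}(-, -) = H^0(\mc{A}(-, -))$, the remaining work is routine diagram-chasing with distinguished triangles and the Yoneda lemma.
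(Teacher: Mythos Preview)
Your proposal is correct and follows essentially the same route as the paper: reduce to the homotopy category via the observation that acyclicity of $\mc{A}(X,Y)$ is equivalent to $\Hom_{H^0\mc{A}}(X,Y[n])=0$ for all $n$, then run the standard triangulated-category argument (decomposition triangles give the adjoint, and conversely the unit of the adjunction produces the required triangle). Your treatment is in fact more complete than the paper's in one respect: you explicitly verify the equality $\lo{\mc{A}}_- = \mc{A}_+$ in the direction (i)~$\Rightarrow$~(ii), whereas the paper only constructs the left adjoint and leaves that identification implicit.
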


\begin{proof}
We will prove only that (i) is equivalent to (ii), the proof that (i) is equivalent to (iii) being similar.

First note that $\mc{A}(A_+, A_-)$ is acyclic if and only if $H^0\mc{A}(A_+, A_-[n]) = 0$ for all $n \in \Z$, so $\mc{A}(A_+,A_-)$ being acyclic for all $A_+ \in \mc{A}_+$ and $A_- \in \mc{A}_-$ is equivalent to $H^0\mc{A}(A_+,A_-)$ being zero for all $A_+ \in \mc{A}_+$ and $A_- \in \mc{A}_-$, since $\mc{A}_-$ is pretriangulated.

Let $f: A \ra B$ be a morphism in $H^0\mc{A}$.
Since $\langle \mc{A}_-, \mc{A}_+ \rangle$ is a semi-orthogonal decomposition of $\mc{A}$, we obtain two exact triangles
\begin{equation*}
    \begin{tikzcd}[row sep=small]
        A_+ \arrow[r, "a_+"]
            & A \arrow[r, "a_-"] 
                & A_- \\
        B_+ \arrow[r, "b_+"]
            & B \arrow[r, "b_-"] 
                & B_-                
    \end{tikzcd}
\end{equation*}
in $H^0\mc{A}$.
Then there is a morphism of exact triangles
\begin{equation*}
    \begin{tikzcd}
        A_+ \arrow[r, "a_+"] \arrow[d]
            & A \arrow[r, "a_-"] \arrow[d, "b_-f"]
                & A_- \arrow[d, dashed, "f_-"]\\
        0 \arrow[r]
            & B_- \arrow[r, equal]
                & B_-
    \end{tikzcd}
\end{equation*}
in which the dotted arrow $f_-:A_- \ra B_-$ exists and is unique because of \stackstag{0FWZ}.
Thus we can define a functor $F: H^0\mc{A} \ra H^0\mc{A}_-$ by choosing an $A_- \in \mc{A}_-$ for each $A \in \mc{A}$, which is left adjoint to the inclusion $H^0\mc{A}_- \ra H^0\mc{A}$ by construction.

Conversely, suppose that $F: H^0\mc{A} \ra H^0\mc{A}_-$ is left adjoint to the inclusion $H^0\mc{A}_- \ra H^0\mc{A}$ and that $\lo{\mc{A}}_- = \mc{A}_+$. 
Let $A \in \mc{A}$.
Then there is a canonical morphism $A \ra F(A)$ in $H^0\mc{A}$, which we extend to an exact triangle $A \ra F(A) \ra B$. 
If $A \in \mc{A}_-$, then the canonical morphism $A \ra F(A)$ is an isomorphism, as the inclusion $H^0\mc{A}_- \ra H^0\mc{A}$ is fully faithful.
Hence, the image of the exact triangle $A \ra F(A) \ra B$ under $F$ is isomorphic to the exact triangle $F(A) = F(A) \ra 0$, so $F(B) = 0$.
This yields $H^0\mc{A}(B,A_-) = H^0\mc{A}_-(F(B), A_-) = 0$ by adjunction for any $A_- \in \mc{A}_-$, so it follows that $B \in \mc{A}_+$.
Rotation of triangles yields an exact triangle $B[1] \ra A \ra F(A)$, which is induced by a morphism $B[1] \ra A$ with $B[1] \in \mc{A}_+$, whose cone is in $\mc{A}_-$. 
\end{proof}

Recall that for a triangulated category $\mc{T}$ and a thick subcategory $\mc{T}'$, the \emph{Verdier quotient} $\mc{T}/\mc{T}'$ is the unique (up to unique equivalence) triangulated category such that any exact functor $F: \mc{T}' \ra \mc{S}$ with $\mc{T} \ra \mc{T}' \ra \mc{S}$ trivial factors uniquely through the canonical functor $\mc{T}' \ra \mc{T}'/\mc{T}$.
Recall furthermore that a sequence $\mc{T}_1 \ra \mc{T}_2 \ra \mc{T}_3$ of triangulated categories is called \emph{exact} if the composition is trivial, $\mc{T}_1 \ra \mc{T}_2$ is a fully faithful exact functor whose image is a thick subcategory of $\mc{T}_2$, and the induced functor $\mc{T}_2/\mc{T}_1 \ra \mc{T}_3$ is an equivalence of triangulated categories.

\begin{definition} \label{def:quasiexactsequenceofdgcategories}
A sequence 
\begin{equation*}
    \mc{A} \ra \mc{B} \ra \mc{C}
\end{equation*}
of pretriangulated dg categories is called \emph{quasi-exact} if the associated sequence
\begin{equation*}
    H^0\mc{A} \ra H^0\mc{B} \ra H^0\mc{C}
\end{equation*}
of triangulated categories is exact.
It is called \emph{split exact} if $H^0\mc{A} \ra H^0\mc{B}$ admits a right adjoint.
\end{definition}

Semi-orthogonal decompositions correspond to split exact sequences by lemma \ref{lemma:semiorthogonaldecompositionadmitsadjoints} and the following result.

\begin{lemma} \label{lemma:rightorthogonalofsemiorthogonaldecompositionisquotient}
Let $\langle \mc{A}_-, \mc{A}_+ \rangle$ be a semi-orthogonal decomposition of $\mc{A}$.
Then $H^0\mc{A}_-$ is equivalent as a triangulated category to the Verdier quotient $H^0\mc{A}/H^0\mc{A}_+$.
\end{lemma}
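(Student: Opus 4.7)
The plan is to show that the composition
\begin{equation*}
    G: H^0\mc{A}_- \hookrightarrow H^0\mc{A} \xrightarrow{q} H^0\mc{A}/H^0\mc{A}_+
\end{equation*}
is an equivalence of triangulated categories, where $q$ is the Verdier quotient functor. I will exhibit the inverse of $G$ by applying the universal property of the Verdier quotient to the left adjoint $F: H^0\mc{A} \to H^0\mc{A}_-$ of the inclusion $j$, whose existence is guaranteed by lemma \ref{lemma:semiorthogonaldecompositionadmitsadjoints}. A preliminary step is to verify that $H^0\mc{A}_+$ is a thick subcategory of $H^0\mc{A}$: it is triangulated because $\mc{A}_+$ is pretriangulated, and it is closed under direct summands because its inclusion admits a right adjoint, again by lemma \ref{lemma:semiorthogonaldecompositionadmitsadjoints}. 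Hence the Verdier quotient is well defined and $G$ is exact.

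The key step is to show that $F$ sends $H^0\mc{A}_+$ to zero. For $A_+ \in \mc{A}_+$, semi-orthogonality together with the adjunction give
\begin{equation*}
    H^0\mc{A}_-(F(A_+), A_-) \cong H^0\mc{A}(A_+, j(A_-)) = 0
\end{equation*}
for all $A_- \in \mc{A}_-$, so taking $A_- = F(A_+)$ forces $F(A_+) = 0$. Equivalently, any morphism in $H^0\mc{A}$ with cone in $H^0\mc{A}_+$ is sent to an isomorphism by $F$ (apply the exact functor $F$ to its defining triangle), so the universal property of the Verdier quotient yields a unique exact factorization $\bar{F}: H^0\mc{A}/H^0\mc{A}_+ \to H^0\mc{A}_-$ with $\bar{F}\circ q = F$. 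Then $\bar{F}$ and $G$ are mutual quasi-inverses: the composite $\bar{F} \circ G = F \circ j$ is naturally isomorphic to the identity on $H^0\mc{A}_-$ because $j$ is a fully faithful right adjoint, and for $A \in H^0\mc{A}$ the semi-orthogonal triangle $A_+ \to A \to jF(A) \to A_+[1]$ from the proof of lemma \ref{lemma:semiorthogonaldecompositionadmitsadjoints} becomes, after applying $q$, a natural isomorphism $q(A) \cong q(jF(A)) = (G \circ \bar{F})(q(A))$.

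The main obstacle will be organizing the naturality of the semi-orthogonal triangle, which one must know behaves functorially in $A$ for the isomorphism $G \circ \bar{F} \cong \id$ to be natural. This follows from the fact that the morphism $A \to jF(A)$ is the unit of the adjunction $F \dashv j$, hence automatically natural; alternatively, one can argue directly using roof calculus, observing that for $A_-, B_- \in \mc{A}_-$ and any roof $A_- \xleftarrow{s} X \to B_-$ with $\cone(s) \in H^0\mc{A}_+$, applying $H^0\mc{A}(-, B_-)$ to the triangle $X \xrightarrow{s} A_- \to \cone(s)$ and using semi-orthogonality shows $s^*$ is an isomorphism, so every such roof is represented by a unique honest morphism in $H^0\mc{A}(A_-, B_-)$. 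The remainder of the proof is a straightforward assembly of adjunction facts and the universal property of Verdier quotients.
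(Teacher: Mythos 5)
Your proof is correct, but it runs in the opposite direction from the paper's. The paper never touches the actual quotient construction: it verifies that the pair $(H^0\mc{A}_-, F)$, with $F$ the left adjoint from lemma \ref{lemma:semiorthogonaldecompositionadmitsadjoints}, satisfies the universal property of the Verdier quotient, by taking an arbitrary exact $G$ killing $H^0\mc{A}_+$, completing $f\colon A \ra A'$ to a morphism of the semi-orthogonal triangles, and observing that $G(A) \ra G(A_-)$ is then an isomorphism, so $G$ factors through $F$. You instead form $H^0\mc{A}/H^0\mc{A}_+$ itself, factor $F$ through $q$ via the universal property (using $F(A_+)=0$, which you prove cleanly by the adjunction plus semi-orthogonality), and check that $\bar{F}$ and $q\circ j$ are mutually quasi-inverse using the counit of $F \dashv j$ and the triangle $A_+ \ra A \ra jF(A)$. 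What the paper's route buys is economy: no roof calculus, no thickness discussion, no naturality-across-the-localization issue, at the price of only exhibiting the factorization (uniqueness and exactness of the induced functor are left implicit). Your route buys an explicit equivalence and, via your roof argument that $s^*\colon H^0\mc{A}(A_-,B_-) \ra H^0\mc{A}(X,B_-)$ is an isomorphism when $\cone(s) \in \mc{A}_+$, the full faithfulness of $q|_{H^0\mc{A}_-}$, but it needs the extra bookkeeping you flag (exactness of $F$, naturality of the unit, compatibility with roofs). Two small blemishes, neither fatal: the thickness of $H^0\mc{A}_+$ does not follow merely from the inclusion having a right adjoint (a retract of an object of a reflective subcategory need not lie in it without idempotent completeness); the correct quick argument is that $\mc{A}_+ = \lo{\mc{A}_-}$ by lemma \ref{lemma:semiorthogonaldecompositionadmitsadjoints}, and left orthogonals are closed under summands -- alternatively, thickness is not really needed, since the quotient by a triangulated subcategory agrees with the quotient by its thick closure. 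Likewise you should say a word on why $F$ is exact (it is a left adjoint of an exact functor between triangulated categories), since the universal property you invoke is stated for exact functors.
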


\begin{proof}
We must show that $H^0\mc{A}_-$ satisfies the universal property of the Verdier quotient.
Let $F: H^0\mc{A} \ra H^0\mc{A}_-$ be the left adjoint to the inclusion $H^0\mc{A}_- \subset H^0\mc{A}$, and for $A \in H^0\mc{A}$ let $A_- = F(A)$. 
Let $G: H^0\mc{A} \ra H^0\mc{B}$ be an exact functor such that each $A_+ \in H^0\mc{A}_+$ is mapped to the zero object in $H^0\mc{B}$.
Then a morphism $f:A \ra A'$ in $H^0\mc{A}$ can be completed to a morphism of exact triangles
\begin{equation*}
    \begin{tikzcd}
        A_+ \arrow[r] \arrow[d, dashed]
            & A \arrow[r] \arrow[d, "f"]
                & A_- \arrow[d, "F(f)"] \\
        A_+' \arrow[r]
            & A' \arrow[r]
                & A_-'.
    \end{tikzcd}
\end{equation*}
As $G(A_+) = 0$, the image of this morphism of triangles under $G$ is
\begin{equation*}
    \begin{tikzcd}
        0 \arrow[r] \arrow[d, equal]
            & G(A) \arrow[r] \arrow[d, "G(f)"]
                & G(A_-) \arrow[d, "G(F(f))"] \\
        0 \arrow[r]
            & G(A') \arrow[r]
                & G(A_-'),
    \end{tikzcd}
\end{equation*}
and since $G$ is exact, this is a morphism of exact triangles.
It follows that $G(A) \ra G(A_-)$ is an isomorphism for all $A \in H^0\mc{A}$.
Hence $G$ factors through $F$, as was to be shown.
\end{proof}

Now we expand the definition of a semi-orthogonal decomposition by allowing more components, which will make the concept more flexible in practice.

\begin{definition} \label{def:semiorthogonaldecompositionmorecomponents}
Let $\mc{A}$ be a pretriangulated dg category with full pretriangulated dg subcategories $\mc{A}_1, \dots, \mc{A}_n$ for some $n \in \N_{\geq 2}$.
For $0 \leq i \leq n$, Define $\mc{A}_{\leq i} = \langle \mc{A}_1, \dots, \mc{A}_i \rangle$ to be the smallest full pretriangulated dg subcategory containing each $\mc{A}_j$ for $j \leq i$, and define $\mc{A}_{\geq i}$ similarly.
Note that $\mc{A}_{\leq 0} = 0$. 
Then $\mc{A}_1, \dots, \mc{A}_n$ form a \emph{semi-orthogonal decomposition $\langle \mc{A}_1, \dots, \mc{A}_n \rangle$ of $\mc{A}$} if $\langle \mc{A}_{\leq i}, \mc{A}_{\geq i+1} \rangle$ is a semi-orthogonal decomposition (definition \ref{def:semiorthogonaldecompositiondgcategory}) of $\mc{A}$ for each $1 \leq i \leq n-1$. 
\end{definition}

\begin{remark} \label{remark:filtrationinducedbysemiorthogonaldecomposition}
Unpacking definition \ref{def:semiorthogonaldecompositionmorecomponents}, we see that subcategories $\mc{A}_1, \dots, \mc{A}_n$ of $\mc{A}$ form a semi-orthogonal decomposition if and only if for all $i > j$, $A_i \in \mc{A}_i$ and $A_j \in \mc{A}_j$, the mapping complex $\mc{A}(A_i, A_j)$ is acyclic, and each $A \in \mc{A}$ admits a filtration
\begin{equation*}
    0 = A_{\geq n+1} \lra A_{\geq n} \lra \dots \lra A_{\geq 2} \lra A_{\geq 1} = A
\end{equation*}
with $A_{\geq i} \in \mc{A}_{\geq i}$ for each $1 \leq i \leq n+1$, where the cone of $A_{\geq i+1} \ra A_{\geq i}$ lies in $\mc{A}_i$. 
\end{remark}

The following prototypical example of a semi-orthogonal decomposition was given by Bernstein, Gelfand and Gelfand in \cite{bernstein78}. 

\begin{example} \label{def:semiorthogonaldecompositionprojectivespace}
Let $S$ be a scheme and let $X = \P^n_S$ be the projective space over $S$.
Let $\mc{A} = \Perf(X)$ be the pretriangulated dg category of perfect complexes of $\OO_X$-modules.
For $i \in \Z$, define $\mc{A}_i = \langle \OO_X(i) \rangle$ to be the full pretriangulated dg subcategory of $\mc{A}$ generated by the line bundle $\OO_X(i)$. 
Then $\mc{A}(\OO_X(i), \OO_X(j))$ is acyclic for all $j+n \geq i > j$, which follows from the fact that the sheaf cohomology $H^*(X, \OO_X(-r))$ vanishes for $r=1, \dots n$. 
Furthermore, by a dg version of \cite[lemma 3.5.2]{schlichting11}, $\mc{A}$ is generated by the line bundles $\OO_X(i)$ with $i \leq 0$.
The acyclic Koszul complex associated with the surjection $\OO_X^{\oplus n+1} \ra \OO_X$ yields an expression of $\OO_X(-n-1)$ in terms of $\OO_X, \dots, \OO_X(-n)$.
Hence $\langle \mc{A}_{-n}, \dots, \mc{A}_0 \rangle$ is a semi-orthogonal decomposition of $\mc{A}$.
By Corollary \ref{corollary:twistingsemiorthogonaldecompositionbyequivalence}, we can always twist a semi-orthogonal decomposition by a line bundle to obtain another semi-orthogonal decomposition.
In particular, $\langle \mc{A}_i, \dots, \mc{A}_{i+n} \rangle$ is a semi-orthogonal decomposition of $\mc{A}$ for each $i \in \Z$.
This trick will be useful once we start dealing with duality.
\end{example}

Here is another extension of the definition of a semi-orthogonal decomposition following \cite[definition 3.1]{scherotzke20} by allowing infinitely many components, which has applications in equivariant $\K$- and $\GW$-theory. 

\begin{definition} \label{def:infinitesemiorthogonaldecomposition}
Let $\mc{A}$ be a pretriangulated dg category.
For any pre-ordered set $(P,\leq)$ and a collection of full pretriangulated subcategories $\{\mc{A}_i \subset \mc{A} \mid i \in P\}$, the subcategories $\mc{A}_i$ form a \emph{pre-ordered semi-orthogonal decomposition of $\mc{A}$} if
\begin{enumerate}[label=(\roman*)]
    \item for all $i \in P$, the inclusion $H^0\mc{A}_i \subset H^0\mc{A}$ admits both a left and a right adjoint;
    \item for all $i, j \in P$ such that $i < j$, $\mc{A}_i \subset \ro{\mc{A}_j}$; and
    \item $\mc{A}$ is the smallest pretriangulated dg subcategory containing $\mc{A}_i$ for each $i \in P$.
\end{enumerate}
\end{definition}

\begin{example} \label{example:semiorthogonaldecompositionperfectcomplexesmultiplicativegroup}
Let $G = \GG_m$ be the multiplicative group over a base field $k$. 
The pretriangulated dg category $\mc{A} = \Perf^G(k)$ of $G$-equivariant perfect complexes over $k$ is equivalent to the bounded derived dg category of finite dimensional graded $k$-vector spaces.
For $i \in \Z$, let $k(i)$ be the graded vector space with $k(i)_j = \delta_{ij} k$, where $\delta_{ij}$ is the Kronecker delta, and let $\mc{A}_i = \langle k(i) \rangle$. 
Then $\mc{A}_i$ is equivalent to $\Perf(k)$.
Note that $\mc{A}(k(i),k(j)) = 0$ for all $i,j \in \Z$ such that $i \neq j$. 
The smallest pretriangulated dg category containing each $\mc{A}_i$ is $\mc{A}$ itself, and since we only consider finite dimensional $k$-vector spaces, each $V \in \mc{A}$ is contained in $\mc{A}_{[i,j]}$ for some $i,j \in \Z$. 
Hence $\langle \mc{A}_i \mid i \in \Z \rangle$ is an example of a semi-orthogonal decomposition as in definition \ref{def:infinitesemiorthogonaldecomposition}.
\end{example}

A useful feature of semi-orthogonal decompositions is that they are stable under equivalences of pretriangulated dg categories.

\begin{proposition} \label{proposition:twistsemiorthogonaldecompositionbyequivalence}
Let $F: \mc{A} \ra \mc{B}$ be an equivalence of pretriangulated dg categories and let $\langle \mc{A}_-, \mc{A}_+ \rangle$ be a semi-orthogonal decomposition of $\mc{A}$. 
Then the image $\langle F(\mc{A}_-), F(\mc{A}_+) \rangle$ is a semi-orthogonal decomposition of $\mc{B}$. 
\end{proposition}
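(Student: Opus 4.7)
The plan is to verify directly that the pair $\langle F(\mc{A}_-), F(\mc{A}_+) \rangle$ satisfies the two conditions of Definition \ref{def:semiorthogonaldecompositiondgcategory}, using the two defining features of an equivalence of pretriangulated dg categories: $F$ induces quasi-isomorphisms on all mapping complexes, and every object of $\mc{B}$ is quasi-isomorphic to some $F(A)$ with $A \in \mc{A}$. Implicitly I interpret $F(\mc{A}_\pm)$ as the full pretriangulated dg subcategory of $\mc{B}$ on objects quasi-isomorphic to $F(A_\pm)$ for some $A_\pm \in \mc{A}_\pm$; this is pretriangulated because $F$ is a dg functor between pretriangulated dg categories, hence preserves cones and shifts up to quasi-isomorphism.

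For condition (i), given $B_+ \in F(\mc{A}_+)$ and $B_- \in F(\mc{A}_-)$, choose $A_\pm \in \mc{A}_\pm$ with $B_\pm \simeq F(A_\pm)$. Since quasi-isomorphic objects produce quasi-isomorphic mapping complexes and since $F$ is quasi-fully faithful, we have
\begin{equation*}
    \mc{B}(B_+, B_-) \simeq \mc{B}(F(A_+), F(A_-)) \simeq \mc{A}(A_+, A_-),
\end{equation*}
and the rightmost complex is acyclic by the hypothesis on $\langle \mc{A}_-, \mc{A}_+ \rangle$. Thus $\mc{B}(B_+, B_-)$ is acyclic, as required.

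For condition (ii), take an arbitrary $B \in \mc{B}$ and pick $A \in \mc{A}$ with a quasi-isomorphism $F(A) \simeq B$ by essential surjectivity. By assumption, $A$ sits in an exact triangle $A_+ \xrightarrow{f} A \to A_-$ with $A_\pm \in \mc{A}_\pm$ and $A_- \simeq \cone(f)$. Applying $F$ yields an exact triangle $F(A_+) \xrightarrow{F(f)} F(A) \to F(A_-)$ in $\mc{B}$ with $F(A_+) \in F(\mc{A}_+)$ and $\cone(F(f)) \simeq F(A_-) \in F(\mc{A}_-)$. Composing $F(f)$ with the chosen quasi-isomorphism $F(A) \simeq B$ produces the required closed morphism $F(A_+) \to B$ whose cone lies in $F(\mc{A}_-)$.

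The main bookkeeping subtlety, and the closest thing to an obstacle, is the distinction between strict image and essential image: $F$ need not be bijective on nose-level objects, so one must be comfortable transporting everything through quasi-isomorphisms in both the source of mapping complexes (for (i)) and the cone construction (for (ii)). Once this is accepted, the proof is a direct unwinding of the definitions, with no input beyond the fact that dg-equivalences preserve both mapping complexes up to quasi-isomorphism and exact triangles.
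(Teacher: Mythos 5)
Your proof is correct and follows essentially the same route as the paper: check condition (i) via (quasi-)full faithfulness of $F$ on mapping complexes, and condition (ii) via essential surjectivity together with the fact that $F$ preserves cones, then transport the triangle along the isomorphism $F(A) \simeq B$. Your extra care about interpreting $F(\mc{A}_\pm)$ up to quasi-isomorphism is a reasonable refinement in the spirit of the paper's Remark \ref{remark:quasiequivalencepreservessod}, but does not change the argument.
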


\begin{proof}
Since $F$ preserves shifts and cones, $F(\mc{A}_-)$ and $F(\mc{A}_+)$ are full pretriangulated dg subcategories of $\mc{B}$. 

For $A_- \in \mc{A}_-$ and $A_+ \in \mc{A}_+$, as $F$ is fully faithful, $\mc{B}(F(A_+), F(A_-)) = \mc{A}(A_+, A_-)$, which is acyclic by assumption.
Let $B \in \mc{B}$ and let $A \in \mc{A}$ such that $F(A) \cong B$, which exists since $F$ is essentially surjective. 
Then $A$ admits a sequence
\begin{equation*}
    A_+ \stlra{f} A \lra \cone(f)
\end{equation*}
with $A_+ \in \mc{A}_+$ and $\cone(f) \in \mc{A}_-$ by assumption.
Taking the image of this sequence under $F$ yields a sequence
\begin{equation*}
    F(A_+) \stlra{F(f)} F(A) \lra F(\cone(f)),
\end{equation*}
where $F(\cone(f)) \cong \cone(F(f))$.
As $F(A) \cong B$, there is a sequence
\begin{equation*}
    F(A_+) \stlra{g} B \lra \cone(g)
\end{equation*}
with $\cone(g) \in F(\mc{A}_-)$. 
Therefore, $\langle F(\mc{A}_-), F(\mc{A}_+) \rangle$ is a semi-orthogonal decomposition of $\mc{B}$, as was to be shown.
\end{proof}

\begin{remark} \label{remark:quasiequivalencepreservessod}
We could weaken the conditions on $F$ in Proposition \ref{proposition:twistsemiorthogonaldecompositionbyequivalence} by only requiring that $F$ be a quasi-equivalence, so that the induced functor on homotopy categories 

fully faithful, so that the natural map $\mc{A}(A, A') \ra \mc{B}(F(A), F(A'))$ is a quasi-isomorphism for all $A, A' \in \mc{A}$, and quasi-essentially surjective, so that the induced
\end{remark}

\begin{corollary} \label{corollary:twistingsemiorthogonaldecompositionbyequivalence}
Let $\mc{A}$ be a pretriangulated dg category with a semi-orthogonal decomposition $\langle \mc{A}_i \mid i \in \Z \rangle$. 
Let $F: \mc{A} \ra \mc{A}$ be an equivalence of dg categories. 
Then $\langle F(\mc{A}_i) \mid i \in \Z \rangle$ is a semi-orthogonal decomposition of $\mc{A}$. 
\end{corollary}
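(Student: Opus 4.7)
The plan is to verify the three defining conditions of Definition \ref{def:infinitesemiorthogonaldecomposition} for the collection $\{F(\mc{A}_i)\}_{i \in \Z}$, mimicking the componentwise proof of Proposition \ref{proposition:twistsemiorthogonaldecompositionbyequivalence}. The key observation is that each condition---orthogonality, generation, and existence of adjoints---is preserved by an equivalence of pretriangulated dg categories. I would fix a quasi-inverse $F^{-1}: \mc{A} \ra \mc{A}$ of $F$ and use it freely; in particular $F^{-1}$ preserves shifts, cones, and acyclicity of mapping complexes.

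First I would verify orthogonality (condition (ii)): for $i < j$ and objects $A_i \in \mc{A}_i$, $A_j \in \mc{A}_j$, full faithfulness of $F$ yields a quasi-isomorphism $\mc{A}(A_j, A_i) \simeq \mc{A}(F(A_j), F(A_i))$, and the left-hand side is acyclic by hypothesis, so $F(\mc{A}_i) \subset \ro{F(\mc{A}_j)}$. Next, for the generation condition (iii), let $\mc{B} \subset \mc{A}$ denote the smallest full pretriangulated dg subcategory containing every $F(\mc{A}_i)$. Then $F^{-1}(\mc{B})$ is a pretriangulated dg subcategory of $\mc{A}$ containing every $\mc{A}_i$, hence coincides with $\mc{A}$ by assumption; applying $F$ back gives $\mc{B} = \mc{A}$, as required.

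For the adjoint condition (i), the inclusion $H^0 F(\mc{A}_i) \hookrightarrow H^0 \mc{A}$ factors, up to natural isomorphism, as
\[
H^0 F(\mc{A}_i) \stackrel{\sim}{\lra} H^0 \mc{A}_i \hookrightarrow H^0 \mc{A} \stackrel{\sim}{\lra} H^0 \mc{A},
\]
where the two outer equivalences are induced by a quasi-inverse of $F$ and by $F$ respectively, while the middle inclusion admits both a left and a right adjoint by hypothesis. Since equivalences of categories are both left and right adjoint to their quasi-inverses, and adjoints compose, the composite inclusion admits both adjoints as well. The only mild subtlety I anticipate lies in tracking this zigzag carefully enough to confirm it really presents the twisted inclusion, but this is a purely formal transport-of-structure argument and requires no new ingredient beyond the dg-functoriality of $F$; in particular, the corresponding statement for the finite-component variant of Definition \ref{def:semiorthogonaldecompositionmorecomponents} follows by iterating Proposition \ref{proposition:twistsemiorthogonaldecompositionbyequivalence}.
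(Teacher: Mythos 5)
Your proof is correct (at the same level of rigor as the paper regarding images of subcategories and closure under isomorphism), but it takes a genuinely different route from the paper's. The paper deduces the corollary from Proposition \ref{proposition:twistsemiorthogonaldecompositionbyequivalence}: it applies that proposition to the two-term cut-offs $\langle \mc{A}_{\leq i}, \mc{A}_{> i} \rangle$ to obtain semi-orthogonal decompositions $\langle F(\mc{A}_{\leq i}), F(\mc{A}_{> i}) \rangle$, and then concludes using essential surjectivity of $F$ together with the assumption that every object lies in some $\mc{A}_{[i,j]}$ --- an exhaustion property that is not literally part of Definition \ref{def:infinitesemiorthogonaldecomposition} but is tacitly in force in the paper's applications (cf.\ Example \ref{example:semiorthogonaldecompositionperfectcomplexesmultiplicativegroup}). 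You instead verify the three axioms of Definition \ref{def:infinitesemiorthogonaldecomposition} directly: orthogonality by full faithfulness of $F$, generation via a quasi-inverse, and the adjoint condition by transport of structure (the twisted inclusion is naturally isomorphic to the original inclusion pre- and post-composed with equivalences, and both-sided adjoints compose). What each buys: your argument is self-contained with respect to the pre-ordered definition actually quoted in the statement and needs neither Proposition \ref{proposition:twistsemiorthogonaldecompositionbyequivalence} nor the cut-off decompositions (whose status as two-term semi-orthogonal decompositions is exactly what requires the exhaustion the paper silently assumes), while the paper's route is shorter given the proposition and immediately covers the finite decompositions used in the body of the text --- your closing remark about iterating the proposition recovers that case too. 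The one caveat, shared with the paper's own proofs, is that $F(\mc{A}_i)$ and the image of $\mc{B}$ under a quasi-inverse should be read up to closure under isomorphism (quasi-essential images); this affects none of the three conditions.
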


\begin{proof}
By proposition \ref{proposition:twistsemiorthogonaldecompositionbyequivalence}, $\langle F(\mc{A}_{\leq i}), F(\mc{A}_{> i}) \rangle$ is a semi-orthogonal decomposition of $\mc{A}$ for each $i \in \Z$. 
By assumption, each $A \in \mc{A}$ is contained in $\mc{A}_{[i,j]}$ for some $i,j \in \Z$, so each $F(A)$ is contained in $F(\mc{A}_{[i,j]}) = (F\mc{A})_{[i,j]}$ for some $i,j \in \Z$. 
Since $F$ is an equivalence, each $A \in \mc{A}$ is isomorphic to $F(B)$ for some $B \in \mc{A}$, and the result follows.
\end{proof}

\printbibliography

\end{document}